\newtheorem{theorem}{Theorem}
\newtheorem{prop}{Proposition}[section]
\newtheorem*{theorem*}{Theorem}
\newtheorem{lem}[prop]{Lemma}
\theoremstyle{definition}
\newtheorem{definition}[prop]{Definition}
\theoremstyle{remark}
\newtheorem{remark}[prop]{Remark}
\newtheorem*{remark*}{Remark}
\newtheorem{corollary}[prop]{Corollary}
\numberwithin{equation}{section}
\author[J.~Breuer, S.~Denisov, and L.~Eliaz]{Jonathan~Breuer$^{1,3}$, Sergey Denisov$^{2}$ and Latif Eliaz$^{1,4}$}
\title[The Essential Spectrum on Trees]{On the Essential Spectrum of Schr\"odinger Operators on Trees}
\begin{document}
\sloppy
\thanks{$^1$ Institute of Mathematics, The Hebrew University of Jerusalem, Jerusalem, 91904, Israel.
Supported in part by the Israel Science Foundation (Grant No.\ 399/16) and in part by the United States-Israel Binational Science Foundation
(Grant No.\ 2014337)}

\thanks{$^2$ Department of Mathematics, University of Wisconsin-Madison, Madison WI 53706-1388, USA. Work on
 section 4 was supported by grant RSF-14-21-00025 and research conducted on other sections was supported by grant NSF-DMS-1464479 and by Van Vleck Professorship Research Award. Email: denissov@wisc.edu}

\thanks{$^3$ E-mail: jbreuer@math.huji.ac.il}

\thanks{$^4$ E-mail: latif.eliaz@mail.huji.ac.il}
\maketitle

\begin{abstract}
It is known that the essential spectrum of a Schr\"odinger operator $H$ on $\ell^{2}\left(\mathbb{N}\right)$ is equal to the union of the
spectra of right limits of $H$.
The natural generalization of this relation to $\mathbb{Z}^{n}$ is known to hold as well.

In this paper we generalize the notion of right limits to general infinite connected graphs and construct examples of graphs for which the essential
spectrum of the Laplacian is strictly bigger than the union of the spectra of its right limits. As these right limits are trees, this
result is complemented by the fact that the equality still holds for general bounded operators on regular trees. We prove this and
characterize the essential spectrum in the spherically symmetric case.

\end{abstract}
\section{Introduction}
Let $G$ be a graph with vertices $V\left(G\right)$ and edges $E\left(G\right)$. The degree of a vertex $v \in V(G)$, $d(v)$, is the number of
$u \in V(G)$ such that $u \sim v$, where we denote $u\sim v$ for vertices $u,v\in G$ if $\left(u,v\right)\in E\left(G\right)$. $G$ is called
regular if all vertices have the same degree. A connected graph which has no cycles is called a tree.

A Jacobi operator on a graph $G$ is an operator, $H$, acting on $\psi\in\ell^2\left(V\left(G\right)\right)\cong\ell^2\left(G\right)$ by
\begin{equation} \nonumber
\left(H\psi \right)\left( v \right)=\sum_{u\sim v}a_{u,v}\psi(u)+b \left(v \right)\psi \left(v \right)
\end{equation}
where $a: \left\{(u,v) \mid u\sim v \right\} \rightarrow (0,\infty)$, satisfying $a_{u,v}=a_{v,u}$, and $b: V\left(G\right) \rightarrow \mathbb{R}$ are functions (which we
take to be bounded throughout the paper). The particular case of $a \equiv 1$ and $b(v)=-\textrm{degree}(v)$ is the discrete graph Laplacian,
$\Delta$. The canonical example we have in mind is that of a Schr\"odinger operator, namely $H=\Delta+Q$, where $Q$ is the operator of
multiplication by a bounded real valued function.

Given a bounded self-adjoint operator, $A$, let $\sigma(A)$ denote its spectrum and $\sigma_\text{discrete}\left(A\right)$ denote the set of
isolated eigenvalues of $A$ of finite multiplicity (=the discrete spectrum). The essential spectrum of $A$ is the set
$\sigma_\text{ess}\left(A\right)=\sigma\left(A\right)\backslash\sigma_\text{discrete}\left(A\right)$.

The essential spectrum of a self-adjoint operator can be characterized as that part of the spectrum that is invariant under compact
perturbations (this is the content of Weyl's Theorem \cite[Theorem S.13]{ReedSimonI}) and so, for a Schr\"odinger operator defined over an
infinite graph, should morally depend only on properties of the operator `at infinity'. For the case of $\mathbb{Z}^d$, this intuition was
made precise by Last-Simon in \cite{LastSimonEss} using the concept of `right limit' (introduced in \cite{LastSimonAC}). The purpose of this
paper is to discuss the possibility of extending this idea to general connected graphs.

For the purpose of this introduction, we define right limits on $\mathbb{N}$. Consider a bounded Jacobi matrix $H$ acting
on $\ell^{2}\left(\mathbb{N}\right)$. A Jacobi matrix $H^{\left(r\right)}$,
acting on $\ell^{2}\left(\mathbb{Z}\right)$, is a right limit of
$H$ if there exists a sequence of indices $\left\{ n_{j}\right\}_{j=1}^\infty \subseteq\mathbb{N}$
s.t.\ for every fixed $l\in\mathbb{Z}$,
\begin{equation} \label{eq:rightlimdef}
a_{l+n_j,l+1+n_j}\stackrel{j\to\infty}{\longrightarrow}a^{(r)}_{l,l+1}, \quad b_{l+n_j}\stackrel{j\to\infty}{\longrightarrow}b^{(r)}_{l}.
\end{equation}
Equivalently, if we extend $H$ to an operator $\tilde{H}$ on $\ell^2\left(\mathbb{Z}\right)$, we will get $H^{(r)}$ as a strong limit\footnote{We recall that a sequence of operators, $\{A_n\}_n$ defined over a Hilbert space $\mathcal{H}$, is said to converge strongly to an operator $A$ if for any $\psi \in \mathcal{H}$, $\lim_{n \rightarrow \infty} A_n \psi=A\psi$. Similarly, it is said to converge weakly if for any $\psi, \phi \in \mathcal{H}$, $\lim_{n \rightarrow \infty} \left(\phi, A_n \psi \right)=\left(\phi, A\psi \right)$.} of a
sequence of left-shifts of $\tilde{H}$ (corresponding to the sequence $\left\{ n_{j}\right\}$). Note that, by compactness, one can always find
a right limit along a subsequence of any such sequence of shifts.

Thus, a right limit of $H$ is a limit point of shifts of $H$. The concept of right limit was extended in \cite{LastSimonEss} to operators on
$\mathbb{Z}^d$, where in this case all possible directions towards infinity must be considered. We shall extend the notion of right limit to
general connected graphs in Section \ref{sec:rlimdef} below. Since the word `right' no longer makes sense in this context, we shall name the relevant
objects $\mathcal{R}$-limits.

The following characterization of the essential spectrum of Jacobi matrices is essentially from \cite{LastSimonEss} (for related results see
\cite{AmreinMP, Anselone, CWL, GI1,GI2,GI3,Kurbatov, LanRab, M1, MPR,Muham1,Muham2, RRS,RRSband, SeidelSil, Shubin1,Shubin2,SimonSz};
comprehensive reviews and further references on the subject can be found in \cite{CWL, LastSimonEss, SimonSz}).

\begin{theorem}[\cite{LastSimonEss,RRS,SimonSz}] \label{thm:LastSimon}
 Assume $H$ is a bounded Jacobi matrix on $\ell^{2}\left(\mathbb{N}\right)$ or on $\ell^2 \left(\mathbb{Z}^d \right)$. Then
\begin{eqnarray} \label{eq:Last-Simon}
\sigma_{ess}(H) & = & \bigcup_{H^{\left(r\right)}\text{ is a right limit of
\ensuremath{H}}}\sigma\left(H^{\left(r\right)}\right)\label{eq:mainResult}
\end{eqnarray}
\end{theorem}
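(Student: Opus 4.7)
The statement is an equality of sets, so I would prove the two inclusions separately, using Weyl's criterion as the main tool: $\lambda\in\sigma(A)$ iff some normalized sequence $\psi_n$ satisfies $\|(A-\lambda)\psi_n\|\to 0$, and $\lambda\in\sigma_{\mathrm{ess}}(A)$ iff such a sequence can additionally be chosen with $\psi_n\rightharpoonup 0$.

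The inclusion $\bigcup\sigma(H^{(r)})\subseteq\sigma_{\mathrm{ess}}(H)$ is the easier direction. I would fix $\lambda\in\sigma(H^{(r)})$ for a right limit obtained along shifts $\{n_j\}$, pick a Weyl sequence $\phi_k$ for $H^{(r)}$ at $\lambda$ consisting (by norm density) of finitely supported vectors, and use the strong convergence of shifts of $H$ to $H^{(r)}$ on each fixed $\phi_k$ to choose $j(k)$ so large that the back-shifted vector $\tilde\phi_k := S_{n_{j(k)}}\phi_k$ satisfies $\|(H-\lambda)\tilde\phi_k\|<1/k+\|(H^{(r)}-\lambda)\phi_k\|$. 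The right-hand side tends to zero, and $n_{j(k)}\to\infty$ makes the supports of the $\tilde\phi_k$ escape every finite set so that $\tilde\phi_k\rightharpoonup 0$. Weyl's criterion then gives $\lambda\in\sigma_{\mathrm{ess}}(H)$.

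For the reverse inclusion, I would start with a Weyl sequence $\psi_n\rightharpoonup 0$ for $H$ at $\lambda$ and aim to produce shifts $x_n\to\infty$ along which, on a subsequence, (i) the shifted operators $S_{x_n}^{-1}HS_{x_n}$ converge entrywise (equivalently strongly, since the coefficients are local and uniformly bounded) to some right limit $H^{(r)}$, and (ii) the shifted vectors $\tilde\psi_n := S_{x_n}^{-1}\psi_n$ converge weakly to a nonzero $\psi^{(r)}$. Given (i) and (ii), passing to the weak limit in $(S_{x_n}^{-1}HS_{x_n}-\lambda)\tilde\psi_n = S_{x_n}^{-1}(H-\lambda)\psi_n\to 0$ against any finitely-supported test vector yields $(H^{(r)}-\lambda)\psi^{(r)}=0$, hence $\lambda\in\sigma(H^{(r)})$. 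Part (i) is essentially free: the Jacobi coefficients live in a compact product space, so any sequence of shifts admits an entrywise-convergent subsequence.

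The main obstacle is part (ii), the nontriviality of $\psi^{(r)}$. One needs to choose $x_n$ so that a uniformly positive fraction of the $\ell^2$-mass of $\psi_n$ sits in a ball of fixed radius around $x_n$, for otherwise $\tilde\psi_n$ escapes weakly to zero. This is not automatic: a generic normalized $\ell^2$ sequence can be spread arbitrarily thinly, and the Weyl condition is needed to rule this out. The standard remedy is a localization lemma combining $\|(H-\lambda)\psi_n\|\to 0$ with the short-range structure of $H$: a cube-covering/pigeonhole argument shows that an approximate eigenvector cannot be too diffuse, yielding constants $R,c>0$ and centers $x_n$ with $\|\chi_{B_R(x_n)}\psi_n\|\ge c$. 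The weak-zero assumption $\psi_n\rightharpoonup 0$ then forces $|x_n|\to\infty$, and the rest is routine bookkeeping with strong-operator limits and Weyl's criterion.
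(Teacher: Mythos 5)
First, a contextual point: the paper does not actually prove this theorem --- it is quoted from Last--Simon, Rabinovich--Roch--Silbermann and Simon's book. What the paper does prove are the two analogues: the easy inclusion for general graphs (Theorem \ref{thm:Thm1}, Section \ref{sec:thm1}) and the tree version (Theorem \ref{thm:Thm3}, Section \ref{sec:thm3}). Your first inclusion, $\bigcup_r \sigma(H^{(r)})\subseteq\sigma_{\mathrm{ess}}(H)$, is correct and is essentially identical to the paper's proof of Theorem \ref{thm:Thm1}: truncate a Weyl vector of the right limit to a finite ball, transplant it back near the shift centers, and pass to a subsequence with disjoint supports.

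The reverse inclusion contains a genuine gap: the ``localization lemma'' you invoke is false as stated. There are no constants $R,c>0$ and centers $x_n$ with $\|\chi_{B_R(x_n)}\psi_n\|\ge c$ for a general Weyl sequence. For the free Jacobi matrix on $\ell^2(\mathbb{Z})$, the vectors $\psi_n=(n+1)^{-1/2}e^{ik\cdot}\chi_{[0,n]}$ satisfy $\|(H-2\cos k)\psi_n\|\to 0$ but put mass at most $(2R+1)/(n+1)\to 0$ in every ball of radius $R$; every shifted subsequence then converges weakly to zero and your limiting identity $(H^{(r)}-\lambda)\psi^{(r)}=0$ degenerates to $0=0$. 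The actual argument (Proposition 2.1 of Last--Simon, reproduced here as Proposition \ref{prop:LastSimon} and used in Section \ref{sec:thm3}) cuts $\psi_n$ with a partition of unity $\{j_\alpha\}$ whose plateaus have width $L$, so that $\|[H,j_\alpha]\|=O(1/L)$, and uses a pigeonhole over $\alpha$ to find a piece $j_{\alpha_n}\psi_n$ that is a good approximate eigenvector \emph{relative to its own norm} $\|j_{\alpha_n}\psi_n\|$ --- which may tend to zero, whence the need to renormalize. This yields only that $\lambda$ is within $\epsilon$ of the spectrum of some right limit for every $\epsilon>0$, i.e.\ $\lambda\in\overline{\bigcup_r\sigma(H^{(r)})}$. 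Removing the closure is a separate, nontrivial fact (the union of the spectra of right limits is closed), proved in \cite{RRS} and \cite{SimonSz} by quite different means; the paper's remark after the theorem statement, and its inability to drop the closure in Theorem \ref{thm:Thm3}, both reflect exactly this difficulty. So your outline needs two repairs: replace the false uniform-mass claim by the commutator/partition-of-unity estimate with renormalization, and add an argument for closedness of the union.
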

\begin{remark*}
This theorem was stated in \cite{LastSimonEss} with $\overline{\bigcup_{r}\sigma\left(H^{\left(r\right)}\right)}$ on the right hand side.
However, $\bigcup_{r}\sigma\left(H^{\left(r\right)}\right)$ is in fact closed (see e.g.\ \cite{RRS} and \cite{SimonSz} for details).
\end{remark*}

In the context of regular trees, Golenia \cite{Golenia} and Golenia-Georgescu \cite{GolGeor}, have shown (\ref{eq:mainResult}) in the
particular case of Schr\"odinger operators when $Q$ has a limit (in the usual sense) on every path to infinity. We are not aware of other
results treating general potentials $Q$ on graphs and trees.

One direction of the inclusion in \eqref{eq:Last-Simon} is almost trivial for operators on $\ell^2 \left(\mathbb{Z}^d \right)$ and follows
almost immediately from the characterization of the essential spectrum via an orthogonal sequence of approximate eigenfunctions. This is true
for the case of general graphs (with the same argument). Nevertheless, we give a proof of this theorem in Section \ref{sec:thm1} below for
completeness.

\begin{theorem}\label{thm:Thm1}Assume $H$ is a bounded Jacobi matrix on
$\ell^{2}\left(G\right)$ where $G$ is a connected graph of bounded degree, then,
\[
\bigcup_{L\text{ is an \ensuremath{\mathcal{R}}-limit of \ensuremath{H}}}\sigma\left(L\right)\subseteq\sigma_{\text{ess}}\left(H\right)
\]
\end{theorem}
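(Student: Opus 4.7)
The strategy is the Weyl criterion: given $\lambda\in\sigma(L)$ for some $\mathcal{R}$-limit $L$ of $H$, it suffices to construct an orthonormal sequence $\{\psi_k\}\subset\ell^2(G)$ with $\|(H-\lambda)\psi_k\|\to 0$. The natural way is to take compactly-supported approximate eigenfunctions of $L$ and transplant them into mutually disjoint regions of $G$ via the local graph isomorphisms that should be built into the definition of an $\mathcal{R}$-limit (Section~\ref{sec:rlimdef}).

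Concretely, I would fix a sequence $\varepsilon_k\downarrow 0$. Since $\lambda\in\sigma(L)$ on its limit graph $\Gamma$ and finitely-supported vectors are dense in $\ell^2(\Gamma)$, one can choose unit vectors $\phi_k\in\ell^2(\Gamma)$ supported in a ball $B_{R_k}$ around the distinguished root with $\|(L-\lambda)\phi_k\|<\varepsilon_k$. By the $\mathcal{R}$-limit property, there is a sequence of vertices $v_j\in V(G)$ escaping to infinity together with graph-isomorphic ``charts'' $\iota_j\colon B_{\rho_j}\to V(G)$, $\rho_j\to\infty$, for which the pulled-back entries of $H$ converge pointwise to those of $L$. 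I then pick $j_1<j_2<\cdots$ inductively so that (i) $\rho_{j_k}>R_k+1$; (ii) the entries of $H$ on $\iota_{j_k}(B_{R_k+1})$ differ from those of $L$ on $B_{R_k+1}$ by less than $\varepsilon_k$; (iii) the images $\iota_{j_k}(B_{R_k+1})$ are pairwise disjoint. Item (iii) is achievable because $G$ has bounded degree, so each such ball is finite and the next $v_{j_{k+1}}$ need only be chosen far from the finitely many previously used images. Setting $\psi_k:=\phi_k\circ\iota_{j_k}^{-1}$, extended by zero to $V(G)$, gives an orthonormal sequence in $\ell^2(G)$.

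To estimate $(H-\lambda)\psi_k$, note that $H$ couples only nearest neighbours and $\operatorname{supp}\psi_k\subset\iota_{j_k}(B_{R_k})$, so this vector is supported inside $\iota_{j_k}(B_{R_k+1})$. On that region $\iota_{j_k}$ is a graph isomorphism and the entries of $H$ match those of $L$ up to the error in (ii), whence, after summing over the bounded-by-degree number of neighbours,
\[
\|(H-\lambda)\psi_k\|\le\|(L-\lambda)\phi_k\|+C\varepsilon_k\le(C+1)\varepsilon_k,
\]
with $C$ depending only on the degree bound and $\|H\|$. Hence $\lambda\in\sigma_{\text{ess}}(H)$. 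The only real obstacle is the bookkeeping: one must be sure the charts $\iota_{j_k}$ are graph isomorphisms on a ball of radius \emph{strictly larger} than the support radius of $\phi_k$, otherwise boundary edges of $G$ not seen by $\Gamma$ (or vice versa) would produce uncontrolled terms in $(H-\lambda)\psi_k$. Once this is absorbed into the $\mathcal{R}$-limit definition, the argument is a standard diagonal extraction that mimics the $\mathbb{Z}^d$ case.
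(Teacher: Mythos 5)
Your proposal is correct and follows essentially the same route as the paper: truncate an approximate eigenfunction of the $\mathcal{R}$-limit to a ball, transplant it into $G$ via the local isomorphisms from the definition of $\mathcal{R}$-limit at vertices escaping to infinity (using a slightly larger ball, radius $R+1$ or $R+2$, to control boundary terms), space the images apart to get orthonormality, and invoke Weyl's criterion. The technical caveat you flag about needing the charts to match on a strictly larger ball is exactly what the paper handles by working with $B_{R+2}$.
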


We remark that we expect the analogue of this theorem to hold for the generalization of Jacobi matrices to band-dominated operators (see e.g.\
\cite{RRS} for the definition).

Our first main result is the somewhat surprising fact that for general graphs the reverse inclusion fails.

\begin{theorem}
\label{thm:Thm2}There exists a connected graph $G$ s.t.\ the adjacency operator (i.e., the Jacobi matrix with $a \equiv 1$, $b \equiv 0$) on $G$
satisfies,
\[
\sigma_{\text{ess}}\left(A_G\right)\Bigg\backslash\overline{\bigcup_{L\text{ is an \ensuremath{\mathcal{R}}-limit of
}A_G}\sigma\left(L\right)} \neq \emptyset.
\]
\end{theorem}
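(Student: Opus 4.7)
The plan is to construct $G$ as a chain of finite $3$-regular graphs whose local geometry is asymptotically tree-like, but whose global structure produces a spectral value at $3$ lying strictly above the top $2\sqrt{2}$ of the $3$-regular tree spectrum. Take finite $3$-regular graphs $\{H_n\}_{n\ge 1}$ with $|V(H_n)|\to\infty$ and $\operatorname{girth}(H_n)\to\infty$ (for example, large-girth cubic Ramanujan graphs or typical random $3$-regular graphs), and pick vertices $x_n,y_n\in V(H_n)$ with $d_{H_n}(x_n,y_n)\to\infty$. Form $G$ by adding a single ``bridge'' edge from $y_n$ to $x_{n+1}$ for every $n$. Then $G$ is connected, has maximum degree $4$, and its degree-$4$ vertices appear only in adjacent pairs $\{y_{n-1},x_n\}$; consecutive such pairs are separated by a distance at least $d_{H_n}(x_n,y_n)\to\infty$.

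First I would show $3\in\sigma_{\mathrm{ess}}(A_G)$ via Weyl sequences. Put $\phi_n:=|V(H_n)|^{-1/2}\chi_{V(H_n)}$. These are orthonormal, and since $H_n$ is $3$-regular---so every vertex of $H_n$, including the bridge endpoints $x_n,y_n$, has exactly three neighbors inside $V(H_n)$---the vector $(A_G-3)\phi_n$ is supported only on the two external bridge endpoints $y_{n-1},x_{n+1}$, with $\|(A_G-3)\phi_n\|^2=2/|V(H_n)|\to 0$. Weyl's criterion then gives $3\in\sigma_{\mathrm{ess}}(A_G)$.

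Next I would analyze the $\mathcal R$-limits. Because $\operatorname{girth}(H_n)\to\infty$ and $d_{H_n}(x_n,y_n)\to\infty$, every ball of fixed radius around a base point tending to infinity is eventually a tree that meets at most one of the adjacent pairs $\{y_{k-1},x_k\}$. Hence every $\mathcal R$-limit $L$ is a rooted tree all of whose vertices have degree $3$ except for at most a single degree-$4$ vertex, or a single adjacent pair of degree-$4$ vertices, at some bounded distance from the root. Each such $L$ is a finite-rank modification of a disjoint union of $3$-regular trees, so $\sigma_{\mathrm{ess}}(L)=\sigma_{\mathrm{ess}}(T_3)=[-2\sqrt 2,2\sqrt 2]$ by Weyl's theorem, and this interval excludes $3$. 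To rule out $3$ as a discrete eigenvalue of $L$, I would pass to spherical coordinates about the defect and look for an eigenfunction $\psi(v)=f(d(v))$ of $A_L\psi=3\psi$. Away from the defect, the equation becomes the recursion $f(n-1)+2f(n+1)=3f(n)$ with characteristic roots $\{1,1/2\}$; $\ell^2$-summability against the sphere-size weight $3\cdot 2^{n-1}$ forces the constant mode's coefficient to vanish, leaving $f(n)=B\cdot 2^{-n}$ for large $n$, and matching to the (over-determined) boundary conditions at the defect forces $B=0$. A non-radial eigenfunction at $3$ must, by symmetry, vanish on the defect region and restrict to an $\ell^2$-eigenfunction of a proper subtree of $T_3$, but $\|A_{T_3}\|=2\sqrt 2<3$, so no such eigenfunction exists either.

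The main obstacle is the last step: verifying $3\notin\sigma(L)$ \emph{uniformly} in the defect type and its distance from the root. The radial/angular decomposition handles all cases in principle, but a cleaner packaging is to encode each $L$ as a direct sum of a Jacobi matrix on $\mathbb{N}$---the radial part, with eventually constant coefficients and top of spectrum $2\sqrt 2$---and a collection of operators dominated by $A_{T_3}$---the angular parts---and then quote $3>2\sqrt 2$ uniformly. Combining Steps 1--3, $3\in\sigma_{\mathrm{ess}}(A_G)\setminus\overline{\bigcup_L \sigma(L)}$, which proves the theorem.
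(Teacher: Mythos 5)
Your proposal is correct and follows the same high-level strategy as the paper: chain together large-girth regular graphs so that the normalized constant functions on the pieces form a Weyl sequence at $\lambda=d$ (here $d=3$), while every $\mathcal{R}$-limit is a tree whose spectrum misses $d$. The differences are in the gluing and in the key exclusion lemma. The paper inserts each regular graph $G_{n_i}$ into a half-line, separated by line segments of growing length; its $\mathcal{R}$-limits are then $A_{\mathbb{Z}}$, $A_{T_d}$, and the adjacency operator on $\widetilde{T}$ (a half-line attached to a $d$-regular tree), and the crucial lemma $d\notin\sigma(A_{\widetilde{T}})$ is proved in the appendix by a rank-two resolvent identity combined with the explicit $m$-functions of the line and the tree. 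Your direct bridges eliminate the line-type limits but replace $\widetilde{T}$ by a different defect graph (two copies of $T_3$ joined by an edge, with an adjacent pair of degree-$4$ vertices), and you exclude $3$ from its spectrum by Weyl's theorem plus a radial/angular decomposition about the defect: the angular sectors are unitarily equivalent to half-line Jacobi matrices of norm $2\sqrt{2}<3$, and in the radial sector the recursion $f(n-1)+2f(n+1)=3f(n)$ forces the $\ell^{2}$ solution $f(n)=B\cdot 2^{-n}$, which the (over-determined) matching at the defect kills. Both routes work; yours is more elementary and self-contained at the defect lemma (no Green's function formulas needed, and it visibly generalizes to any degree-profile defect of bounded rank), while the paper's resolvent computation yields the sharper statement $\sigma(A_{\widetilde{T}})=\sigma(A_{T_d})$ as a byproduct. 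Two small points: the spectrum of an $\mathcal{R}$-limit does not depend on the root, so the ``uniformly in the distance from the root'' worry in your last step is vacuous, and since your construction produces only two limit operators up to equivalence, the union of their spectra is automatically closed, which is what lets you conclude $3\notin\overline{\bigcup_{L}\sigma(L)}$.
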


While the graph of Theorem \ref{thm:Thm2} is not a tree, its construction involves the use of a sequence of regular graphs with girth growing
to infinity. It has three right limits, a line, a $d$-regular tree, and a gluing of the two. It is thus of some interest that the result still
holds on regular trees.

\begin{theorem}
\label{thm:Thm3}Assume $H$ is a bounded Jacobi matrix on $\ell^{2}\left(T\right)$ where $T$ is a regular tree, then
\begin{equation} \label{eq:closureEss}
\sigma_{\text{ess}}\left(H\right)=\overline{\bigcup_{L\text{ is an \ensuremath{\mathcal{R}}-limit of \ensuremath{H}}}\sigma\left(L\right)}.
\end{equation}
\end{theorem}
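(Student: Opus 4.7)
One inclusion, $\overline{\bigcup_L \sigma(L)} \subseteq \sigma_{\text{ess}}(H)$, is immediate from Theorem \ref{thm:Thm1} and the closedness of $\sigma_{\text{ess}}(H)$. For the reverse inclusion, my strategy is: given $\lambda \in \sigma_{\text{ess}}(H)$, produce, for every $\varepsilon > 0$, an $\mathcal{R}$-limit $L_\varepsilon$ of $H$ together with a nonzero vector $\tilde{\psi}_\varepsilon \in \ell^2(T)$ satisfying $\|(L_\varepsilon - \lambda)\tilde{\psi}_\varepsilon\| \leq \varepsilon$. Then $\mathrm{dist}\bigl(\lambda, \sigma(L_\varepsilon)\bigr) \leq \varepsilon/\|\tilde{\psi}_\varepsilon\|$, and $\lambda$ lands in $\overline{\bigcup_L \sigma(L)}$ as soon as this distance tends to zero with $\varepsilon$.

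The transport machinery: fix a reference vertex $o \in T$. Because the regular tree is vertex-transitive, for any sequence $\{v_n\} \subset T$ there are automorphisms $\phi_n \in \mathrm{Aut}(T)$ with $\phi_n(v_n) = o$, and the induced unitaries $(U_n f)(w) := f(\phi_n^{-1}(w))$ define conjugates $\tilde{H}_n := U_n H U_n^*$ whose matrix elements are the coefficients of $H$ read off along $\phi_n$. Since these are uniformly bounded, a diagonal extraction produces a subsequence along which every matrix element of $\tilde{H}_n$ converges; the limit $L$ is a bounded Jacobi matrix on $T$ which, by the definition of $\mathcal{R}$-limit, belongs to the family on the right of \eqref{eq:closureEss}, and uniform boundedness upgrades entry-wise convergence to strong operator convergence $\tilde{H}_n \to L$ on all of $\ell^2(T)$.

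The source of almost-eigenfunctions is the spectral projection $P_\varepsilon := E_H\bigl((\lambda - \varepsilon, \lambda + \varepsilon)\bigr)$, which has infinite rank because $\lambda \in \sigma_{\text{ess}}(H)$. In particular, for any finite $F \subset T$ there exists $v \notin F$ with $\|P_\varepsilon \delta_v\| > 0$ (otherwise $\mathrm{range}(P_\varepsilon) \subset \mathrm{span}\{P_\varepsilon \delta_v : v \in F\}$, contradicting infinite rank), and $\psi := P_\varepsilon \delta_v/\|P_\varepsilon \delta_v\|$ then satisfies $\|(H-\lambda)\psi\| \leq \varepsilon$ with $|\psi(v)| = \|P_\varepsilon \delta_v\| > 0$. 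Choose in this way a sequence $\psi_n$ with centers $v_n \to \infty$, transport as above, and pass to a weak limit $\tilde{\psi}_n \rightharpoonup \tilde{\psi}$. A standard test-vector pairing (using self-adjointness of $\tilde{H}_n$, strong convergence $\tilde{H}_n \varphi \to L\varphi$ on finitely supported $\varphi$, weak convergence of $\tilde{\psi}_n$, and $\|(\tilde{H}_n - \lambda)\tilde{\psi}_n\| \leq \varepsilon$) gives $\|(L - \lambda)\tilde{\psi}\| \leq \varepsilon$, while $\tilde{\psi}(o) = \lim_n \psi_n(v_n)$ yields $\|\tilde{\psi}\| \geq |\tilde{\psi}(o)| = \liminf_n \|P_\varepsilon \delta_{v_n}\|$.

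The main obstacle — and the reason closure appears on the right of \eqref{eq:closureEss} — is controlling the ratio $\varepsilon/\|\tilde{\psi}\|$ as $\varepsilon \to 0$. On $\mathbb{Z}^d$ the analogous difficulty is avoided by naive truncation: a unit Weyl vector restricted to a ball of radius $R$ is still a good approximate eigenfunction (boundary error vanishes as $R \to \infty$) and, being supported on $\sim R^d$ sites, has $\ell^\infty$-norm at least $R^{-d/2}$, which yields the required lower bound. On a $d$-regular tree with $d \geq 3$, exponential volume growth defeats this: a unit vector supported in a ball of radius $R$ can have $\ell^\infty$-norm as small as $d^{-R/2}$. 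I expect this to be addressed by exploiting the disjoint subtree decomposition $T \setminus \{v\} = T_1 \sqcup \cdots \sqcup T_d$ iteratively: since any unit vector carries $\ell^2$-mass at least $1/\sqrt{d}$ in at least one $T_i$, one can recursively restrict-and-renormalize into a deeper and deeper subtree, with the boundary-truncation error at each step controlled by the amplitude at the deleted vertex, and thus produce for each $\varepsilon$ a unit vector $\phi_\varepsilon$ supported on a subtree descending to infinity, satisfying $\|(H-\lambda)\phi_\varepsilon\| \leq \varepsilon$, whose amplitude at the chosen center is large enough that $\varepsilon/\|\tilde{\phi}_\varepsilon\| \to 0$. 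Carrying out this localization carefully — and accommodating nearby values of $\lambda$ in $\sigma(L_\varepsilon)$ rather than $\lambda$ itself, which is why the closure on the right of \eqref{eq:closureEss} cannot in general be removed — is the technical heart of Theorem \ref{thm:Thm3} and is where the absence of cycles in $T$ is used essentially.
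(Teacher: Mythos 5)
Your first inclusion and your transport machinery are fine (the automorphism/diagonal-extraction argument is essentially Proposition B.1 of the paper, and the weak-limit pairing giving $\|(L-\lambda)\tilde{\psi}\|\le\varepsilon$ is standard), and you have correctly diagnosed the crux: one must lower-bound the mass of the transported vector, i.e.\ produce an approximate eigenfunction whose error is small \emph{relative to its own norm} after localization near a single vertex escaping to infinity. But that crux is exactly what your proposal does not prove. The spectral-projection vectors $P_\varepsilon\delta_{v_n}/\|P_\varepsilon\delta_{v_n}\|$ give no control: $\operatorname{tr}P_\varepsilon=\infty$ only bounds $\sum_v\|P_\varepsilon\delta_v\|^2$ from below and is compatible with $\|P_\varepsilon\delta_v\|\to 0$ along every sequence, so your weak limit $\tilde{\psi}$ may vanish. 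And the recursive ``restrict-and-renormalize into a deeper subtree'' sketch does not close the gap as stated: each restriction can retain as little as a $1/\sqrt{d}$ fraction of the $\ell^2$-mass, so renormalizing at each of $R$ steps can inflate the accumulated boundary error by a factor of order $d^{R/2}$ --- the same exponential loss you correctly identified as defeating naive truncation on balls.

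The paper's proof supplies precisely the missing ingredient, and by a different mechanism. Starting from a Weyl sequence $f_n$ vanishing on large balls, it builds an IMS-type partition of unity $\sum_\alpha j_{\alpha,L}^2=1$ out of triangular bumps on \emph{annuli} $\{\,||v|-\alpha|<L\,\}$; because the weight $w_L\sim L$, the commutators satisfy $\|{-2}\sum_\alpha[H,j_{\alpha,L}]^2\|\lesssim L^{-2}$, whence $\sum_\alpha\|(H-\lambda)j_{\alpha,L}f_n\|^2\lesssim\epsilon^2$. Each annular piece is then split as $j_{\alpha,L}=\sum_\beta j_{\alpha,L,\beta}$ over the subtrees hanging off the inner sphere $|v|=\alpha-L$; since these pieces vanish on that sphere, the vectors $(H-\lambda)j_{\alpha,L,\beta}f_n$ have disjoint supports and the squared norms add exactly. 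Comparing $\sum_{\alpha,\beta}\|(H-\lambda)j_{\alpha,L,\beta}f_n\|^2\lesssim\epsilon^2\sum_{\alpha,\beta}\|j_{\alpha,L,\beta}f_n\|^2$, a pigeonhole yields a single piece with $\|(H-\lambda)j_{\alpha_n,L,\beta_n}f_n\|\lesssim\epsilon\,\|j_{\alpha_n,L,\beta_n}f_n\|$ and nonzero norm; transporting that piece along a path to infinity (extracted by the nested-interval/Bolzano--Weierstrass argument) gives $\operatorname{dist}(\lambda,\sigma(H'))\lesssim\epsilon$ for some $\mathcal{R}$-limit $H'$. Without this averaging-plus-disjoint-support step your argument is incomplete. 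One further small correction: the closure in \eqref{eq:closureEss} is an artifact of this method, not a necessity --- the paper expects the union to be closed and proves this in the spherically symmetric case (see \eqref{eq:EssNoClosure}), so you should not assert that it ``cannot in general be removed.''
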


\paragraph*{Remarks}
\begin{enumerate}

\item The proof of this theorem is an adaptation of the method of proof from \cite{LastSimonEss} to the case of a tree, and relies on the
    construction of an appropriate partition of unity. However, the direct approach of defining the relevant functions on balls fails due to
    the fact that the size of the boundary of balls is comparable to their volume. Nevertheless, by restricting attention to annuli and
    using the structure of the tree, it is possible to overcome this obstacle.

\item Note the closure in \eqref{eq:closureEss}. Since our proof is an adaptation of \cite{LastSimonEss}, we cannot do better with this
    method. We believe it is possible to show that the set on the RHS is closed, however this seems to require some delicate estimates on
    the growth of generalized eigenfunctions on trees and is likely to be somewhat technical. Since this is not in the main thrust of this
    paper, we postpone this analysis to a later work. Nevertheless, we note that our discussion in Section \ref{sec:spherical} shows that in
    the spherically symmetric case \eqref{eq:closureEss} holds without the closure (see \eqref{eq:EssNoClosure}).

\item We expect Theorem \ref{thm:Thm3}, and its proof, to carry over to the case of non-regular trees as well. We restrict our attention to
    regular trees here for simplicity.

\item It is interesting to study the possibility to generalize another characterization of the essential spectrum of Jacobi matrices which
    holds in the 1-dimensional case. In this case, the essential spectrum is characterized in terms of
    $\sigma_{\infty}\left(J^{\left(r\right)}\right)$, the pure point spectra in $\ell^{\infty}\left(\mathbb{Z}\right)$ of the right limits
    (see e.g.\  \cite[Theorem 7.2.1]{SimonSz}).

\end{enumerate}

The structure of this paper is as follows. First, in Section \ref{sec:rlimdef} we define $\mathcal{R}$-limits on infinite graphs. Sections
\ref{sec:thm1}, \ref{sec:example} and \ref{sec:thm3} are dedicated to the proofs of Theorems \ref{thm:Thm1}, \ref{thm:Thm2} and \ref{thm:Thm3}
respectively. Finally, in Section \ref{sec:spherical} we discuss the case of spherically symmetric operators on a regular tree, where the
essential spectrum can be characterized using right limits and truncations of right limits of a single one-dimensional Jacobi matrix.


\section{$\mathcal{R}$-Limits of General Graphs} \label{sec:rlimdef}

This section deals with the extension of the concept of right limits to general connected graphs with bounded degree. There are two issues that make the analogous notion of right limit for general graphs more complex than that of the one-dimensional object. The first (minor) one is the fact that general graphs may have multiple paths to infinity. This is true already in the case of $\mathbb{Z}^d$ and is the main reason why we
refrain from using the name `right limit' in this case and use $\mathcal{R}$-limit instead. The second issue is that with a general graph the
absence of homogeneity means that the different $\mathcal{R}$-limits of an operator might be defined on various different graphs which are not
necessarily related in a simple way to the graph over which the original operator was defined. Thus, one is faced with the requirement to
compare operators defined over different graphs. In order to deal with the first issue, one has to specify a path to infinity (in fact, it suffices to consider sequences of vertices with increasing distance from some fixed root, but considering paths is equivalent and more convenient). In order to
deal with the second one, we need to introduce local mappings to finite dimensional vector spaces which will satisfy a certain compatibility
condition with each other.

Let $H$ be a Jacobi matrix on a connected graph $G$ with bounded degree. For any vertex $v\in G$ and $r\in\mathbb{N}$ denote the ball
\[
B_r\left(v\right)=\left\{u\in G\,|\,\text{dist}(u,v)\leq r\right\},
\]
and denote by $N_{v,r}$ the number of vertices in this ball, i.e.\ $N_{v,r}=\left|B_r\left(v\right)\right|$. Let $H_r^{(v)}=H|_{B_r(v)}$(=the
restriction of $H$ to $\ell^2 \left(B_r(v) \right)$).

Let $\eta$ be an indexing of the vertices of this ball,
\begin{equation} \nonumber
\eta:B_r(v)\to\left\{1,2,\ldots,N_{v,r}\right\}
\end{equation}
and define the corresponding unitary mapping $\mathcal{I}_\eta:\ell^2\left(B_r\left(v\right)\right)\to \mathbb{C}^{N_{v,r}}$ by
\begin{equation} \nonumber
\mathcal{I}_\eta \left(\delta_u\right)=e_{\eta(u)},
\end{equation}
for any vertex $u\in B_r(v)$, where $\delta_u$ is the delta function at $u$ and $\left \{e_1,e_2,\ldots,e_{N_{v,r}} \right \}$ is the standard
basis in $\mathbb{C}^{N_{v,r}}$ (i.e.\ $e_i\left(j\right)=\delta_{i,j}$).
Let $M^{(v)}_{\eta, r}\in\mathcal{M}_{N_{v,r},N_{v,r}}$ be the matrix defined by
\begin{equation} \nonumber
M^{(v)}_{\eta,r}=\mathcal{I}_\eta H_r^{(v)} \mathcal{I}_\eta^{-1}.
\end{equation}

\begin{definition}
Fix a vertex $v\in G$ and for any $r \in \mathbb{N}$, let
\begin{equation} \nonumber
\eta_r :B_r(v)\to\left\{1,2,\ldots,N_{v,r}\right\}
\end{equation}
be an enumeration as above, and $\mathcal{I}_r=\mathcal{I}_{\eta_r}$ be the corresponding isomorphism.
We say that the sequence of isomorphisms $\left\{\mathcal{I}_r \right\}_{r=1}^\infty$ is \emph{coherent} if for any $r<s$ and any $u \in
B_r(v)$
\begin{equation} \nonumber
\eta_s(u)=\eta_r(u).
\end{equation}
When we want to emphasize the dependence on $v$, we say that  $\left\{\mathcal{I}_r \right\}_{r=1}^\infty$ is a coherent sequence at $v$.
\end{definition}

Note that, if $\{\mathcal{I}_r\}_{r=1}^\infty$ is a coherent sequence of isomorphisms at $v \in G$, then for any $r$, the corresponding matrix
$M^{(v)}_{\eta_r,r}$ is the $N_{v,r} \times N_{v,r}$ upper left corner of the matrix $M^{(v)}_{\eta_s,s}$ for any $r \leq s$. Thus, in what
follows, when the coherent sequence is clear, we omit the $\eta_r$ and write simply $M^{(v)}_r=M^{(v)}_{\eta_r,r}$.

We say that a sequence of vertices $\left\{v_n\right\}_{n=0}^\infty$ is a path to infinity in $G$ if $v_{n+1}\sim v_n\ \forall
n\in\mathbb{N}$, and
$|v_n|=\text{dist}\left(v_n,v_0\right)\raisebox{\dimexpr-0.2\height}{\rotatebox{0}{$\xrightarrow{\makebox[0.5cm]{$\rotatebox{-0}{$\scriptstyle{n\to\infty}$}$}}$}}\infty$
monotonically.
\begin{definition} \label{def:rlimdef}
Given a graph $G'$, a vertex $v_0'\in G'$ and a Jacobi matrix $H'$ on $G'$, we say that $\left\{H',G',v_0'\right\}$ is an $\mathcal{R}$-limit
of $H$ along the path to infinity $\left\{v_n\right\}_{n=0}^\infty$ if there exists a sequence of indices $\left\{n_j\right\}_{j=1}^\infty$,
such that
\begin{enumerate}[leftmargin=*]
\item[(i)]For any $j\in\mathbb{N}$ there exists a coherent sequence of isomorphisms $\left\{\mathcal{I}^{(j)}_{k}\right\}_{k=1}^\infty$ at
    $v_{n_j}$.
\item[(ii)]There exists a coherent sequence of isomorphisms $\left\{\mathcal{I}'_{k}\right\}_{k=1}^\infty$ at ${v_0'}$.
\item[(iii)]For any $r\in\mathbb{N}$ $N_{v_{n_j},r}=N_{v_0',r}$ for all sufficiently large $j$, and
\begin{equation} \label{eq:rlimdef}
\lim_{j\to\infty}M^{\left(v_{n_j}\right)}_r=M^{(v_0')}_{r}.
\end{equation}
\end{enumerate}
\end{definition}

In the one dimensional case, the matrices $M^{(v_j)}_r$ are simply truncated Jacobi matrices and \eqref{eq:rlimdef} translates to the
condition \eqref{eq:rightlimdef}. Thus, the definition of $\mathcal{R}$-limits is a direct generalization of the definition of right limits in
the one dimensional case.


\section{Proof of Theorem 2} \label{sec:thm1}
First, recall Weyl's criterion (see, e.g.\ \cite[Theorem VII.12]{ReedSimonI}) for the essential spectrum of a bounded self adjoint operator
$A$ on a Hilbert space $\mathcal{H}$: ${\lambda\in\sigma_{\text{ess}}\left(A\right)}$ iff there exists an orthonormal sequence
$\left\{\psi_n\right\}_{n=1}^\infty\subset\mathcal{H}$ of approximate eigenfunctions, i.e.\ functions satisfying
$\left(\psi_n,\psi_k\right)=\delta_{n,k}$ and \[\left\Vert\left(A-\lambda\right)\psi_n\right\Vert\xrightarrow{n\to\infty}0.\]

\begin{proof} [Proof of Theorem \ref{thm:Thm1}]
Assume $\{H',G',v_0'\}$ is an $\mathcal{R}$-limit of $H$ along a path to infinity $\left\{v_j\right\}_{j=0}^\infty$, and
$\lambda\in\sigma\left(H'\right)$.
Given $\varepsilon>0$ define
\[
\varepsilon'=\min\left(\frac{2\varepsilon}{1+\Vert H'\Vert+|\lambda|},\frac{1}{2}\ \right).
\]
Since $\lambda\in\sigma(H')$ there exists $\psi\in\ell^{2}\left(G'\right)$
s.t.\ $\left\Vert \left(H'-\lambda\right)\psi\right\Vert <\varepsilon'$ and $\left\Vert \psi\right\Vert =1$.
Additionally, since $\psi\in\ell^2\left(G'\right)$, there exists $R>0$ s.t.\
\[
\left\Vert \psi|_{G'\backslash B_R(v_0')}\right\Vert <\varepsilon'.
\]
Thus by defining for every $w\in V(G')$
\[
\varphi(w)=\begin{cases}
\psi(w)/{K} & w\in B_R(v_0') \\
0 & \text{otherwise},
\end{cases}
\]
with
\[
K=\left\Vert \psi|_{B_R(v_0')}\right\Vert>\frac{1}{2},
\]
we get an approximate eigenfunction for $H'$, supported on $B_R\left(v_0'\right)$, and satisfying $\left\Vert \varphi\right\Vert =1$. Indeed
\[
\left\Vert\left(H'-\lambda\right)\varphi\right\Vert=
\left\Vert\left(H'-\lambda\right)\frac{\psi-(\psi-\varphi)}{K}\right\Vert< {{1+\left\Vert H'\right\Vert+\left|\lambda\right|}\over{K}}
\varepsilon'\leq\varepsilon.
\]
Since $H'$ is an $\mathcal{R}$-limit of $H$ there exists some $u=v_{n_j}\in G$ s.t.\ the corresponding matrices satisfy
\[
\left\Vert M^{(u)}_{R+2} - M^{(v_0')}_{R+2} \right\Vert < \varepsilon.
\]
Let $\mathcal{I}:\ell^2\left(B_{R+2}(u)\right)\to\mathbb{C}^{N_{u,R+2}}$ and
$\mathcal{I'}:\ell^2\left(B_{R+2}(v_0')\right)\to\mathbb{C}^{N_{v_0',R+2}}$ be the isomorphisms from Definition \ref{def:rlimdef}.
Denote by $\chi'$ the function $\chi'=\mathcal{I}^{-1}\mathcal{I}'\widetilde{\varphi} \in\ell^2\left(B_{R+2}(u)\right)$,
where $\widetilde{\varphi}=\varphi|_{B_{R+2}(v'_0)}$. Additionally define
\[
\chi(w)=\begin{cases}
\chi'(w) & w\in B_{R+2}\left(u\right)\\
0 & \text{otherwise}.
\end{cases}
\]
Then
\[
\left(H-\lambda\right)\chi=\left(H|_{B_{R+2}(u)}-\lambda\right)\chi'
\]
and thus,
\begin{eqnarray}
&\left\Vert \left(H-\lambda\right)\chi\right\Vert =
\left\Vert \left(M^{(u)}_{R+2}-\lambda\right)\mathcal{I}'\widetilde{\varphi}\right\Vert < \nonumber\\
&\left\Vert \left(M^{(u)}_{R+2}-M^{(v_0')}_{R+2}\right)\mathcal{I}'\widetilde{\varphi}\right\Vert +
\left\Vert \left(M^{(v_0')}_{R+2}-\lambda\right)\mathcal{I}'\widetilde{\varphi}\right\Vert <\nonumber\\
&\varepsilon + \left\Vert \left(H'-\lambda\right)\varphi\right\Vert < 2\varepsilon \nonumber
\end{eqnarray}
We can now repeat this argument for a subsequence of vertices along the sequence $\left\{v_{n_j}\right\}$ from Definition \ref{def:rlimdef},
s.t.\ $\text{dist}\left(u_1,u_2\right)>R+2$ for any two vertices $u_1, u_2$ on this subsequence.
As a result we get for any $\varepsilon>0$ an orthonormal sequence of (compactly supported) functions $\left\{\varphi_k\right\}_{k=1}^\infty$
satisfying,
\[
\left\Vert \left(H-\lambda\right)\varphi_k\right\Vert <\varepsilon.
\]
Thus, by taking e.g.\ $\varepsilon_n=\frac{1}{n}$, we can choose an orthonormal sequence of approximate eigenfunctions for $H$.
Hence by Weyl's criterion $\lambda\in\sigma_\text{ess}\left(H\right)$.
\end{proof}


\section{Proof of Theorem 3} \label{sec:example}

\begin{proof}[Proof of Theorem \ref{thm:Thm2}]
We shall construct a graph $G$ for which the adjacency operator $H=A_G$ on $\ell^{2}\left(G\right)$
satisfies
\[
\sigma_{\text{ess}}\left(A_G\right)\Bigg\backslash\overline{\bigcup_{L\text{ is an \ensuremath{\mathcal{R}}-limit of }
A_G}\sigma\left(L\right)}
\]
is nonempty.

We recall that the girth of a graph $\mathcal{G}$ is
\[
\text{girth}\left(\mathcal{G}\right)\equiv\min\left\{\text{length}(l)\,|\,l\text{ is a cycle in }\mathcal{G}\right\}.
\]
Fix $d>2$ and let $\left\{ G_{n_{i}},u^{(1)}_i,u^{(2)}_i \right\} _{i=1}^{\infty}$
be a sequence of $d$-regular graphs on $n_{i}$ vertices, each with two marked vertices $u^{(1)}_i,u^{(2)}_i \in G_{n_i}$, where,
$\left\{n_{i}\right\}_{i=1}^{\infty}\subset\mathbb{N}$, $n_{i}\to\infty$, and s.t.\
\begin{align*}
&\text{girth}\left(G_{n_{i}}\right)\overset{i}{\longrightarrow}\infty,\\
&\text{dist}\left(u^{(1)}_i,u^{(2)}_i\right)\overset{i}{\longrightarrow}\infty.
\end{align*}
By, e.g., \cite{LPS} such a sequence exists for $d=p+1$ for any prime $p\neq1$ satisfying $p\equiv1\left(\text{mod}\,4\right)$.
Additionally, let $\left\{ k_{i}\right\} _{i=1}^{\infty}\subset\mathbb{N}$, be an increasing sequence s.t.\ $k_{i+1}-k_{i}\to\infty$. We
construct $G$ by `replacing' the edge $(k_i, k_i+1)$ in $\mathbb{N}$ by the graph $G_{n_{i}}$. This is done by cutting $(k_i, k_i+1)$ and
attaching $u^{(1)}_i$ to $k_i$ and $u^{(2)}_i$ to $k_{i+1}$ (see Figure \ref{figure1}). Formally
\begin{eqnarray*}
V(G)=&\mathbb{N}\cup \bigcup_{i=1}^\infty V\left(G_{n_i}\right),\\
E(G)=&\left(\bigcup_{i=1}^\infty E\left(G_{n_i}\right)\right) \cup \left(E\left(\mathbb{N}\right)\Big\backslash \cup_{i=1}^\infty
\left\{(k_i,k_{i+1})\right\}\right)\cup \\
&\left(\bigcup_{i=1}^\infty \left\{(k_i,u^{(1)}_i)\right\}\right)\cup \left(\bigcup_{i=1}^\infty \left\{(u^{(2)}_i,k_{i+1})\right\}\right).
\end{eqnarray*}

\begin{figure}[ht!]
\centering
\includegraphics[width=135mm]{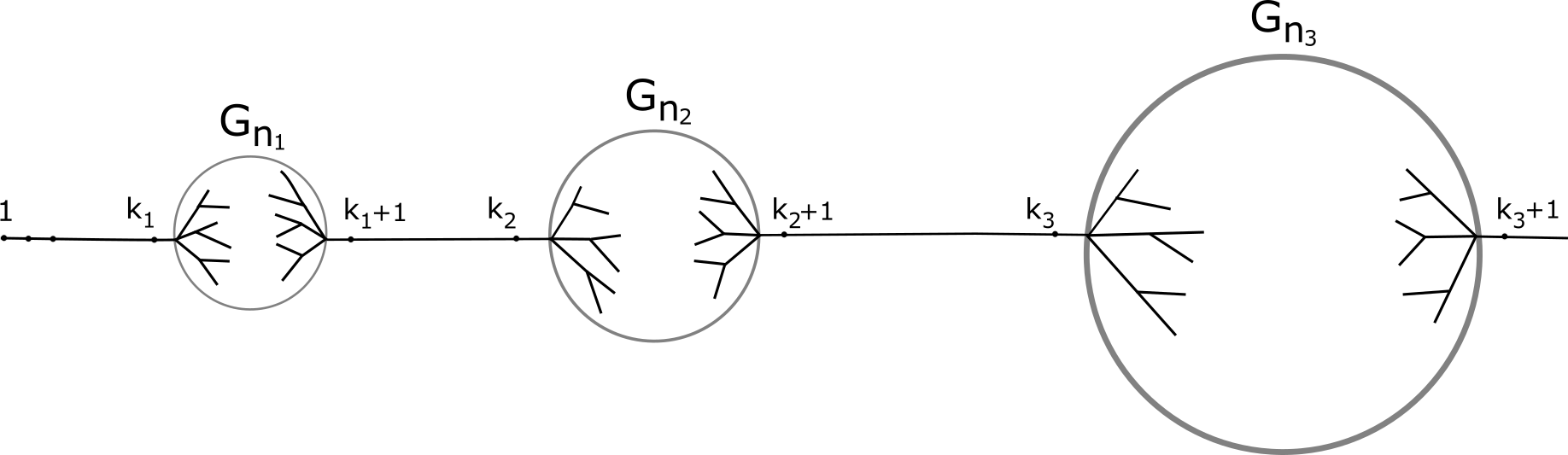}\\
\caption{The construction of the graph $G$ for the counterexample. \label{figure1}}
\end{figure}

Let $H=A_G$ on $G$, i.e.\ the potential is $Q(v)=\text{deg}\left(v\right)$. For each graph
$G_{n_i}$ the constant function $\varphi\left(v\right)=\frac{1}{\sqrt{n_i}}$ is an eigenfunction of $H$ with eigenvalue $\lambda=d$. Define
\[
\varphi_{i}\left(v\right)=\begin{cases}
\sfrac{1}{\sqrt{n_{i}}}\,\,\,\,\, & \text{if }v\in G_{n_{i}}\\
0 & \text{otherwise.}
\end{cases}
\]
Then, summing over the boundary terms, we have for any $i\in\mathbb{N}$
\[
\left\Vert H\varphi_{i}-\lambda\varphi_{i}\right\Vert^{2}=\sfrac{2}{n_{i}}.
\]
Thus
\[
\left\Vert H\varphi_{i}-\lambda\varphi_{i}\right\Vert ^{2}\overset{i\to\infty}{\,\longrightarrow\,\,\,}0.
\]
Additionally, for any $i\neq j$ the functions $\varphi_{i}$ and
$\varphi_{j}$ are orthogonal. Thus $\left\{ \varphi_{i}\right\} _{i=1}^{\infty}$
is an orthonormal sequence of approximate eigenfunctions of $H$
for the value $\lambda=d$, and thus $d\in\sigma_{\text{ess}}\left(H\right).$
We claim that $d\notin\overline{\bigcup\sigma\left(L\right)}$. Indeed, it is easy to see that
the only $\mathcal{R}$-limits of $G$ are the following three objects:
\begin{enumerate}
\item The adjacency operator on the full line $\mathbb{Z}$, appearing when the limit is taken along a subsequence $\left\{ v_{n_j}\right\}
    _{j=1}^{\infty}$ of points (only) on $\mathbb{N}$, of increasing distance from the sequence $\left\{ k_{i}\right\} _{i=1}^{\infty}$,
    i.e.\
\[
\inf_{i}\left(\text{dist}\left(v_{n_j},k_{i}\right)\right)\underset{j}{\longrightarrow}\infty.
\]

\item  The adjacency operator on a $d$-regular tree $T_{d}$, appearing when $\left\{ v_{n_j}\right\} _{j=1}^{\infty}$
includes (only) points on $\left\{ G_{n_{i}}\right\}_{i=1}^{\infty} $, of increasing distance from both the sequences of vertices
$\left\{u^{(1)}_i \right\}_{i=1}^{\infty}$
and $\left\{u^{(2)}_i\right\}_{i=1}^{\infty}$, i.e.\
\[
\inf_{i}\text{dist}\left(v_{n_j},u^{(\ell)}_i\right)\underset{j}{\longrightarrow}\infty
\]
for both $\ell=1,2$.
Since the girth of $G_{n_{i}}$ grows to infinity, we conclude that
for any $R>0$ the reduced graph of radius $R$ around $v_{n_j}$ will
be a tree for $j$ large enough.
\item  The adjacency operator on the tree, $\widetilde{T}=\widetilde{T}_d$, which is a half-line connected to a $d$-regular tree at the
    point $1\in\mathbb{N}$,
appearing when $\left\{v_{n_j}\right\}_{j=1}^{\infty}$ are points
on $\mathbb{N}$ of fixed distance from $\left\{k_{i}\right\}_{i=1}^{\infty}$,
or when $\left\{ v_{n_j}\right\} _{j=1}^{\infty}$ are points from $\left\{ G_{n_{i}}\right\}_{i=1}^{\infty}$
and are of a fixed distance from either $\left\{u^{(1)}_i\right\}_{i=1}^{\infty}$
or $\left\{u^{(2)}_i \right\}_{i=1}^{\infty}$.
\end{enumerate}
The corresponding spectra for the first two operators are:
\begin{enumerate}
\item $\sigma\left(A_{\mathbb{Z}}\right)=\left[-2,2\right]$.
\item $\sigma\left(A_{T_{d}}\right)=\left[-2\sqrt{d-1},2\sqrt{d-1}\right]$.
\end{enumerate}
Both of them do not contain the point $\lambda=d$. The following lemma
completes the argument.
\begin{lem}\label{lem:Lemma1}
$d\notin\sigma\left(A_{\widetilde{T}}\right)$
\end{lem}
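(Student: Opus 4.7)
The plan is to establish $d\notin\sigma(A_{\widetilde{T}})$ by handling the essential spectrum and the point spectrum separately, exploiting the observation that $\widetilde{T}$ differs from the disjoint union of the tree and the half-line by a single edge. Write $A_{\widetilde{T}}=A_0+E$, where $A_0:=A_{T_d}\oplus A_{\mathbb{N}}$ is the adjacency of the disjoint union acting on $\ell^2(\widetilde{T})=\ell^2(T_d)\oplus\ell^2(\mathbb{N})$, and $E$ is the rank-two self-adjoint operator implementing the extra edge between $1\in\mathbb{N}$ and its neighbor $r\in T_d$ (so $E\psi=\psi(r)\delta_1+\psi(1)\delta_r$). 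Since $\sigma(A_{T_d})=[-2\sqrt{d-1},2\sqrt{d-1}]$ and $\sigma(A_{\mathbb{N}})=[-2,2]$ are both purely essential, and $d>2\sqrt{d-1}$ whenever $d>2$, we have $d\notin\sigma(A_0)$; Weyl's theorem on compact perturbations then yields $d\notin\sigma_{\text{ess}}(A_{\widetilde{T}})$.

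To rule out $d$ being an eigenvalue, I would suppose $A_{\widetilde{T}}\psi=d\psi$ for some $\psi\in\ell^2(\widetilde{T})$. Then $(A_0-d)\psi=-E\psi$, and invertibility of $A_0-d$ gives
\[
\psi=-\psi(r)\,R\delta_1-\psi(1)\,R\delta_r,\qquad R:=(A_0-d)^{-1}.
\]
Because $R=(A_{T_d}-d)^{-1}\oplus(A_{\mathbb{N}}-d)^{-1}$ is block-diagonal with respect to the tree/half-line splitting, $\langle\delta_1,R\delta_r\rangle=\langle\delta_r,R\delta_1\rangle=0$. Evaluating the identity above at the two special vertices reduces the eigenvalue problem to the scalar system
\[
\psi(r)=-\psi(1)\,G_T,\qquad \psi(1)=-\psi(r)\,G_H,
\]
with $G_T:=\langle\delta_r,(A_{T_d}-d)^{-1}\delta_r\rangle$ and $G_H:=\langle\delta_1,(A_{\mathbb{N}}-d)^{-1}\delta_1\rangle$. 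A nontrivial solution then requires $G_TG_H=1$.

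It therefore suffices to evaluate the two Green's functions and verify their product is not $1$. On the half-line, the $\ell^2$ solution of $(A_{\mathbb{N}}-d)\phi=\delta_1$ is the exponential $\phi(j)=-\alpha^{-j}$ with $\alpha:=(d+\sqrt{d^2-4})/2>1$ (satisfying $\alpha+\alpha^{-1}=d$), so $G_H=-1/\alpha$. On the tree, radial symmetry around $r$ reduces the defining equation to the one-dimensional recursion $(d-1)\phi_{n+1}-d\phi_n+\phi_{n-1}=0$ for the value $\phi_n$ of $\phi$ on the sphere $S_n$ around $r$ (for $n\ge 1$); its characteristic roots are $1$ and $1/(d-1)$, and the square-summability condition $\sum_n|S_n|\phi_n^2<\infty$ forces the coefficient of the non-decaying solution to vanish, leaving $\phi_n=B(d-1)^{-n}$. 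Substituting into the source equation $d\phi_1-d\phi_0=1$ at the root then gives $G_T=\phi_0=-(d-1)/[d(d-2)]$, whence
\[
G_TG_H=\frac{d-1}{\alpha\,d(d-2)}.
\]
This expression is strictly positive and, since $\alpha>1$ and $d(d-2)-(d-1)=d^2-3d+1>0$ for $d\ge3$, strictly less than $1$. Hence $G_TG_H\ne 1$, forcing $\psi(1)=\psi(r)=0$ and then $\psi\equiv0$ via the displayed formula for $\psi$.

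The main obstacle is the spherical Green's function computation on the tree: one must be careful in combining the recursion (which is valid only for $n\ge1$ and has $1$ as a non-decaying characteristic root) with the source equation at the root $r$ (where the sphere $S_0$ has only a single vertex, so the usual tree recursion does not hold there). Everything else reduces to standard one-dimensional bookkeeping once the rank-two perturbation $A_{\widetilde{T}}=A_0+E$ is identified.
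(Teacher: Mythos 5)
Your argument is correct, and it shares its skeleton with the paper's proof: write $A_{\widetilde{T}}$ as a rank-two (one-edge) perturbation of $A_{T_d}\oplus A_{\mathbb{N}}$, dispose of the essential spectrum by Weyl's theorem, and reduce the question of whether $d$ is an isolated eigenvalue to the single scalar condition $m_{T}(d)\,m_{\mathbb{N}}(d)=1$, where $m_T$ and $m_{\mathbb{N}}$ are the diagonal Green's functions at the two junction vertices --- your $G_T$ and $G_H$ are exactly the paper's $m_T(d+i0)=-\frac{d-1}{d(d-2)}$ and $m_{\mathbb{N}}(d+i0)=\frac{-d+\sqrt{d^2-4}}{2}=-\alpha^{-1}$. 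The execution differs in two worthwhile ways. First, the paper reaches the condition via the resolvent identity for the full $m$-function, $m=m_T/(1-m_Tm_{\mathbb{N}})$, and argues that $\text{Im}\,m(d+i0)=0$; strictly speaking this only rules out eigenvectors that $\delta_0$ can see, whereas your route --- an $\ell^2$ eigenfunction is recovered from its two junction values by applying $(A_0-d)^{-1}$, and those values solve a homogeneous $2\times 2$ system with determinant $1-G_TG_H$ --- closes that loophole directly. Second, the paper verifies $1-m_Tm_{\mathbb{N}}\neq 0$ by the irrationality of $\sqrt{d^2-4}$, while you verify it by the quantitative bound $0<G_TG_H=\frac{d-1}{\alpha\, d(d-2)}<1$ (using $\alpha>1$ and $d^2-3d+1>0$ for $d\ge 3$), which is cleaner and more robust. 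Your derivations of the two Green's functions from the radial recursions, including the separate treatment of the source equation $d\phi_1-d\phi_0=1$ at the root where the bulk recursion fails, are all correct.
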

The proof of this Lemma is given in the Appendix.
Concluding, this example satisfies $d\notin\overline{\bigcup_{L}\sigma\left(L\right)}$,
while $d\in\sigma_{ess}(H)$.
\end{proof}
\begin{remark} \label{rem:Httspec}
In fact $\sigma\left(A_{\widetilde{T}}\right)=\sigma\left(A_{T_d}\right)$, but the calculation is a bit cumbersome, and since it is not
necessary here we do not go into details.
\end{remark}


\section{Proof of Theorem 4} \label{sec:thm3}

We start by reformulating a result from \cite{LastSimonEss} in our context: let $T$ be the $d$-regular tree and consider a bounded
self-adjoint operator, $A$, defined over $\ell^2(T)$. Let $\{j_\alpha \}_{\alpha=1}^\infty$ be a sequence of bounded, self-adjoint operators
on $\ell^2(T)$ such that
\begin{equation} \nonumber
\sum_\alpha j_\alpha^2=\textrm{Id}
\end{equation}
where the convergence in the sum is meant in the weak operator topology sense. The following is Proposition 2.1 from \cite{LastSimonEss}.

\begin{prop} \label{prop:LastSimon}
For any $\varphi \in \ell^2(T)$
\begin{equation} \nonumber
\sum_\alpha \|A j_\alpha \varphi \|^2 \leq 2 \| A \varphi\|^2+\left(\varphi, C \varphi \right)
\end{equation}
where
\begin{equation} \nonumber
C=-2\sum_{\alpha}[A,j_\alpha]^2.
\end{equation}
\end{prop}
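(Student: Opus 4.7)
The plan is to expand $Aj_\alpha \varphi$ via the commutator, apply a simple quadratic inequality to each term, and then collapse the sum using the resolution-of-identity hypothesis $\sum_\alpha j_\alpha^2 = \mathrm{Id}$. Concretely, I first write $A j_\alpha = j_\alpha A + [A,j_\alpha]$, so that
\[
\|A j_\alpha \varphi\|^2 = \|j_\alpha A\varphi + [A,j_\alpha]\varphi\|^2 \leq 2\|j_\alpha A\varphi\|^2 + 2\|[A,j_\alpha]\varphi\|^2,
\]
using $\|x+y\|^2 \le 2\|x\|^2 + 2\|y\|^2$. Summing in $\alpha$ splits the right-hand side into two pieces I can handle separately.

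For the first piece, I use self-adjointness of each $j_\alpha$ and the identity $\sum_\alpha j_\alpha^2 = \mathrm{Id}$ (convergence in the weak operator topology, which is enough to evaluate matrix elements against any fixed vector) to obtain
\[
\sum_\alpha \|j_\alpha A\varphi\|^2 = \sum_\alpha (A\varphi, j_\alpha^2 A\varphi) = \|A\varphi\|^2,
\]
which gives the $2\|A\varphi\|^2$ term. For the second piece, self-adjointness of $A$ and of $j_\alpha$ yields $[A,j_\alpha]^* = -[A,j_\alpha]$, hence $[A,j_\alpha]^*[A,j_\alpha] = -[A,j_\alpha]^2$, and therefore
\[
2\sum_\alpha \|[A,j_\alpha]\varphi\|^2 = -2\sum_\alpha (\varphi,[A,j_\alpha]^2\varphi) = (\varphi, C\varphi),
\]
by the definition of $C$. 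Adding the two bounds produces exactly the claimed inequality.

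The only non-routine point is justifying the interchange of the sum over $\alpha$ with the inner products. For the $\sum_\alpha (A\varphi, j_\alpha^2 A\varphi)$ step this is immediate from the assumed weak-operator convergence of $\sum_\alpha j_\alpha^2$. For the commutator term, since each summand $\|[A,j_\alpha]\varphi\|^2 = -(\varphi,[A,j_\alpha]^2\varphi)$ is nonnegative, the monotone convergence interpretation makes the identity unambiguous, and $C$ is a well-defined (possibly unbounded) nonnegative form on the dense set of $\varphi$ for which the sum is finite; the inequality is vacuous otherwise. I do not expect any deeper obstacle: the proof is essentially the one-line IMS-type localization identity, with self-adjointness of $A$ and of the $j_\alpha$ doing all the work.
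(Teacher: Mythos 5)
Your proof is correct and is exactly the standard IMS-localization argument: the paper does not prove this proposition itself but simply quotes it as Proposition 2.1 of \cite{LastSimonEss}, whose proof is the same decomposition $Aj_\alpha = j_\alpha A + [A,j_\alpha]$ followed by $\|x+y\|^2\le 2\|x\|^2+2\|y\|^2$ and the resolution of identity. Your handling of the anti-self-adjointness of $[A,j_\alpha]$ and of the interchange of sums (all terms nonnegative) is also fine.
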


We are now ready for the

\begin{proof}[Proof of Theorem \ref{thm:Thm3}]
We follow closely the reasoning of the proof of Theorem 1.7 of \cite{LastSimonEss}. Fix a vertex $\in V(T)$ and call it the origin $O$. For
any $v \in V(T)$, let $|v|={\rm dist}(v,O)$. Take $L\in \mathbb{N}$ and for $\alpha\in \mathbb{N}$ define
\[
\psi_{\alpha,L}(v)=\left\{
\begin{array}{cc}
1-\frac{||v|-\alpha|}{L}   & {\rm if} \quad ||v|-\alpha|<L  \\
0 & {\rm otherwise.}
\end{array}
\right.\quad
\]
We denote
\[
j_{\alpha,L}(v)=\psi_{\alpha,L}(v)/\sqrt{w_{L}(Y)}, \quad w_{L}(v)=\sum_{\alpha}\psi^2_{\alpha,L}(v)
\]
so that $\{j_{\alpha,L}^2\}, \alpha\in \mathbb{N}$ forms a partition of unity, i.e.,
\[
\sum_{\alpha} j_{\alpha,L}^2(v)=1
\]
for every $v\in V(T)$. Note that this partition of unity is defined similarly to the one in \cite{LastSimonEss}, except that in our case it is
applied to ``annuli around $O$''.

It is easy to see that $\sqrt{w_L(v)}=c(L)$ for all $v$ with $|v|>L$. Moreover, $c(L)\sim \sqrt L$ in that there are constants, $c_1,c_2>0$
such that
\begin{equation} \nonumber
c_1 \sqrt{L} \leq c(L) \leq c_2 \sqrt{L}.
\end{equation}

As in \cite{LastSimonEss}, it now follows that
\[
|\left( [H,j_{\alpha,L}]\delta_{v_1},\delta_{v_2} \right) |\lesssim \left \{ \begin{array}{cc} (L\cdot c(L))^{-1} & \textrm{ if
dist}(v_1,v_2)=1, \textrm{ and } ||v_1|-\alpha|\le L+1 \\
0 & \textrm{ otherwise, } \end{array} \right.
\]
where the implied constant depends on $\sup_{u\sim v} a_{u,v}$ (recall the function $a$ from the definition of the Jacobi matrix $H$).

Consider the operator
$$
C=-2\sum_{\alpha} [H,j_{\alpha,L}]^2.
$$
It is a hopping operator with range at most $2$, i.e.,
$
\left( C v_1,v_2\right)=0
$ if ${\rm dist}(v_1,v_2)>2$ and the other matrix elements are bounded by
\begin{equation} \nonumber
2\cdot d \cdot(2L) (c(L))^{-2} L^{-2} \left(\sup_{u\sim v}a_{u,v} \right)^2.
\end{equation}
It follows that
$
\|C\|\lesssim L^{-2}.
$

We are now ready to finish the proof. Suppose $\lambda\in \sigma_{\rm ess}(H)$. Then by a simple adaptation of Weyl's criterion (as in the
proof of Theorem \ref{thm:Thm1} above), there is a sequence $\{f_n\}\in \ell^2(T)$ such that
\begin{equation}
\lim_{n\to\infty}\|(H-\lambda)f_n\|= 0, \quad \|f_n\|=1,
\end{equation}
and
\begin{equation}\label{sd1}
\sup_{|v|<R}|f_n(v)|=0
\end{equation}
for every $R>0$ and $n>n_R$.

Fix $\epsilon>0$ and choose $L_\epsilon$ so large that $\|C\|<\epsilon^2$ for every $L>L_\epsilon$. Then by Proposition \ref{prop:LastSimon}
\begin{equation} \label{eq:LastSimonCons}
\sum_{\alpha} \|(H-\lambda)j_{\alpha,L}f_n\|^2\lesssim \epsilon^2+\|(H-\lambda)f_n\|^2\lesssim \epsilon^2
\end{equation}
if $n>\widetilde n_\epsilon$.

In order to obtain an approximate eigenfunction of an $\mathcal{R}$-limit from what we have, we now need to exploit the structure of the tree.
For any $\alpha$ and $L$ let $\{O_\beta\}_{\beta=1}^{N_{\alpha,L}}$ be an enumeration of the vertices satisfying $|O_\beta|=\alpha-L$ ($N_{\alpha,L}=d\cdot d^{\alpha-L-1}$). For
each such $O_\beta$ let $T_\beta$ be the subtree of $T$ with root $O_\beta$ that is obtained by disconnecting the edge closest to $O_\beta$ on the path between $O$
and $O_\beta$. Notice that
\[
j_{\alpha,L}=\sum_{\beta=1}^{N_{\alpha,L}}j_{\alpha,L,\beta}
\]
where $j_{\alpha,L,\beta}=j_{\alpha,L}\cdot \chi_{T_\beta}$ and $\chi_{T_\beta}$ is the characteristic function of $T_\beta$. Notice further
that $j_{\alpha,L,\beta}(v)=0$ if $|v|=\alpha-L$ which implies that the functions $\{(H-\lambda)j_{\alpha,L,\beta}f_n\}$ have disjoint
supports and therefore
\[
 \|(H-\lambda)j_{\alpha,L}f_n\|^2= \sum_{\beta}\|(H-\lambda)j_{\alpha,L,\beta}f_n\|^2\,.
\]
Finally, we note that $\{j^2_{\alpha,L,\beta}\}$ forms a partition of unity, since  $j_{\alpha,L}^2=\sum_{\beta}j^2_{\alpha,L,\beta}$.

This yields that
\[
\sum_{\alpha,\beta} \|j_{\alpha,L,\beta}f_n\|^2=1
\]
and from \eqref{eq:LastSimonCons}
\[
\sum_{\alpha,\beta} \|(H-\lambda)j_{\alpha,L,\beta}f_n\|^2\lesssim  \epsilon^2\sum_{\alpha,\beta} \|j_{\alpha,L,\beta}f_n\|^2\,.
\]

This implies that for each sufficiently large $n$ there are $\alpha_n,\beta_n$ such that
\begin{equation}\label{sd2}
\|j_{\alpha_n,L,\beta_n}f_n\|> 0
\end{equation}
and
\[
 \|(H-\lambda)j_{\alpha_n,L,\beta_n}f_n\|\lesssim  \epsilon \|j_{\alpha_n,L,\beta_n}f_n\|
\]
Moreover $\lim_{n\to\infty}\alpha_n=\infty$ due to \eqref{sd1} and \eqref{sd2}.

Denote the restriction of $H$ to the support of $j_{\alpha_n,\beta_n}$ by $H_{\alpha_n,\beta_n}$. There is a vertex, $v_n$, such that
$B_{L/2}(v_n)$ is contained in the support of $j_{\alpha_n,\beta_n}$. In order to show that there is a subsequence of $\{v_n\}$ lying on a
path to infinity, let us identify each $v \in V(T)$ with a subinterval of $[0,1]$ as follows. Vertices with $|v|=1$ are each assigned to a
single interval $\left[\frac{j-1}{d},\frac{j}{d} \right]$ $(j=1,\ldots,d)$. Subdividing these intervals, each into $(d-1)$ subintervals, we
assign these intervals in turn to the vertices at distance $2$ from $O$ in such a way that if $v'$ lies in $T_v$ then its assigned interval is
contained in the interval assigned to $v$. Continuing in this way, we see that a path to infinity corresponds to a sequence of nested
intervals. It follows, as in the proof of the Bolzano-Weierstrass Theorem, that there is a subsequence of $\{v_n\}$ that lies on a path to
infinity. Thus, by compactness, there is an $\mathcal{R}$-limit, $H'$, of $H$ and a sequence, ${n_k}$, such that $H$ converges to $H'$ along the
sequence $\{v_{n_k}\}$ in the sense of Definition \ref{def:rlimdef}.

Therefore
\[
\|(H'-\lambda)h_n\|\lesssim \epsilon \|h_n\|
\]
for large enough $n$, where $h_n$ is a translation of $j_{\alpha_{k_n},L,\beta_{k_n}}f_{k_n}$. Since $\|h_n\|>0$ and $\epsilon$ is arbitrary,
we have
$$\lambda\in \overline{\bigcup_{L\textrm{ is an } \mathcal{R} \textrm{ limit of } H}\sigma(L)}$$ and the theorem follows.
\end{proof}


\section{Spherically Symmetric Operators on Trees} \label{sec:spherical}

Let $T$ be a $d$-regular tree, and as in the previous section, fix an arbitrary vertex and call it the root, $O$. A Jacobi matrix, $H$, on $T$
is spherically symmetric around $O$ if there exist functions
\begin{equation} \nonumber
A: \mathbb{N}\cup \{0\} \to (0,\infty), \quad B: \mathbb{N}\cup \{0\} \to \mathbb{R}
\end{equation}
so that
\begin{equation} \nonumber
a_{u,v}=A_{\min \left(|u|,|v| \right)}, \quad b(v)=B_{|v|}.
\end{equation}

The spherical symmetry implies that $H$ decomposes as a direct sum of Jacobi matrices on the half-line. Thus, its essential spectrum can be
described using this decomposition and the right limits of the constituent half-line operators. This section is devoted to a discussion of
this description and its relation to the one given by Theorem \ref{thm:Thm3}.

In order to describe the decomposition, we first need
\begin{definition}
\emph{The $k$-th tail} of a Jacobi matrix, $A$ on $\mathbb{N}$, is the Jacobi matrix, $A^{[k]}$ on $\mathbb{N}$, defined by
$\left(A^{[k]}\right)_{i,j}=(A)_{i+k,j+k}$. \emph{The sequence of tails of $A$} is the sequence of matrices $\{A^{[k]} \}_{k=0}^\infty$.
\end{definition}

Let $H$ be spherically symmetric on $T$. Then it follows (see \cite{AF, Breuer}) that $H$ is unitarily equivalent to a direct sum
\[\bigoplus_{n=1}^\infty\left(\oplus_{j=1}^{k_n} S_n\right),\]
where $\{S_n\}_n$ are Jacobi matrices on $\mathbb{N}$ with parameters $\{a_k^{(n)},b_k^{(n)}\}_{k=1}^\infty$ satisfying
\begin{equation*}
a_k^{(n)}=\begin{cases} \sqrt{d} A_1\ \ \ \ \ \ \ \ n=k=1 \\
\sqrt{d-1} A_{k+n-1}\ \ \ \textrm{otherwise},
\end{cases}
\end{equation*}
\begin{equation} \nonumber
b_k^{(n)}=B_{k+n-1},
\end{equation}
and $k_n$ is some explicit function of $n$ and the degree of the tree (\cite{AF, Breuer} have this decomposition for Schr\"odinger operators,
but using the ideas of \cite{Breuer} it is easy to extend the analysis to the Jacobi case). Notice that the direct sum includes $k_n$ copies
of $S_n$ for each $n\in\mathbb{N}$. Additionally, note that the matrix $S_n$ is the ($n-k$)'th tail of the matrix $S_k$ for any
$n>k\in\mathbb{N}$. In short,

\[
S_{1}=\left[\begin{array}{ccccc}
B_1 & \sqrt{d}A_1 & 0\\
\sqrt{d}A_1 & B_2 & \sqrt{d-1}A_2\\
0 & \sqrt{d-1}A_2 & B_3 & \sqrt{d-1}A_3\\
 &  & \sqrt{d-1}A_3 & B_4\\
 &  &  &  & \ddots
\end{array}\right]
\]
and all other $S_n$'s are tails of this Jacobi matrix.

\qquad{}The following general proposition deals with a situation of this type.\medskip{}
\begin{prop}\label{prop:PropA} Assume $J$ is a bounded Jacobi matrix on $\mathbb{N}$ with parameters $\{a_j\}_{j=1}^\infty$ and
$\{b_j\}_{j=1}^\infty$,
satisfying
\[
\sup_{j}\left(\left|a_{j}\right|+\left|b_{j}\right|\right)=M<\infty.
\]
Let $\left\{ J_{n}\right\} _{n=1}^{\infty}$ be a subsequence of the
sequence of tails of $J$, $\left\{J^{[k]}\right\}_{k=1}^\infty$, let $\left\{ i_{n}\right\}_{n=1}^\infty \subset\mathbb{N}$, and
let $K=\bigoplus_{n=1}^{\infty}\left(\oplus_{j=1}^{i_{n}}J_{n}\right)$.
Then the essential spectrum of $K$ satisfies:
\[
\sigma_{\text{ess}}\left(K\right)={\left(\bigcup_{r}\sigma\left(J^{\left(r\right)}\right)\right)\cup\left(\bigcup_{s}\sigma\left(J_{\left(s\right)}\right)\right)}
\]
where $\left\{ J^{\left(r\right)}\right\} $ is the set of right limits
of $J$, and $\left\{ J_{\left(s\right)}\right\} $ is the set of
limit points in the strong operator topology of the sequence $\left\{ J_{n}\right\} _{n=1}^{\infty}.$
\end{prop}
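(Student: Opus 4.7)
My plan is to prove both inclusions via Weyl's criterion, with the one-dimensional case of Theorem~\ref{thm:LastSimon} (Last--Simon on $\mathbb{N}$) as the main external tool. The forward inclusion is the easier one. Any right limit $J^{(r)}$ of $J$ is automatically a right limit of every tail $J^{[k]}$, hence of every $J_n$, so Theorem~\ref{thm:LastSimon} gives $\sigma(J^{(r)})\subseteq\sigma_{\text{ess}}(J_n)\subseteq\sigma_{\text{ess}}(K)$, the last step because $J_n$ is a direct summand of $K$. For a strong limit $J_{(s)}$ of some $\{J_{n_l}\}$ and $\lambda\in\sigma(J_{(s)})$, I would take a compactly supported unit vector $\phi\in\ell^2(\mathbb{N})$ with $\|(J_{(s)}-\lambda)\phi\|$ small; pointwise convergence of the Jacobi parameters $a^{(n_l)}_j,b^{(n_l)}_j$ to those of $J_{(s)}$ forces $\|(J_{n_l}-\lambda)\phi\|$ to be small for large $l$, and embedding $\phi$ into a single copy of $J_{n_l}$ inside $K$ (for each $l$) produces an orthonormal Weyl sequence, since distinct $n_l$'s index orthogonal summands. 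Hence $\lambda\in\sigma_{\text{ess}}(K)$.

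For the reverse inclusion, I would choose an orthonormal Weyl sequence $\{\psi_m\}$ for $K$ at $\lambda$ with $\psi_m\to 0$ weakly, and decompose $\psi_m=\bigoplus_{n,j}\psi_m^{n,j}$ according to $K=\bigoplus_n\bigoplus_{j=1}^{i_n}J_n$; set $\|\psi_m^n\|^2:=\sum_j\|\psi_m^{n,j}\|^2$. If $\limsup_m\|\psi_m^{n_0}\|>0$ for some $n_0$, normalizing $\psi_m^{n_0}$ along an appropriate subsequence yields a weakly null Weyl sequence for $K_{n_0}=\bigoplus_{j=1}^{i_{n_0}}J_{n_0}$, so $\lambda\in\sigma_{\text{ess}}(K_{n_0})=\sigma_{\text{ess}}(J_{n_0})$; one-dimensional Last--Simon applied to the tail $J_{n_0}$, which shares right limits with $J$, then places $\lambda$ in $\bigcup_r\sigma(J^{(r)})$. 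Otherwise $\|\psi_m^n\|\to 0$ for every $n$, the mass of $\psi_m$ escapes to summands of large index, and a diagonal extraction gives $N_m\to\infty$ with $\|\chi_{n\leq N_m}\psi_m\|\to 0$. A weighted pigeonhole applied to the identity $\sum_{n,j}\|(J_n-\lambda)\psi_m^{n,j}\|^2=\|(K-\lambda)\psi_m\|^2\to 0$, restricted to $\{(n,j):n>N_m\}$ (whose total mass tends to $1$), yields indices $(n_m,j_m)$ with $n_m\to\infty$ and a unit vector $\varphi_m:=\psi_m^{n_m,j_m}/\|\psi_m^{n_m,j_m}\|\in\ell^2(\mathbb{N})$ satisfying $\|(J_{n_m}-\lambda)\varphi_m\|\to 0$. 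By compactness I would extract so that $J_{n_m}\to J_{(s)}$ strongly for some strong limit point $J_{(s)}$ of $\{J_n\}$ and $\varphi_m\to\varphi_\infty$ weakly in $\ell^2(\mathbb{N})$; passing to weak limits in $\langle(J_{n_m}-\lambda)\varphi_m,\delta_j\rangle\to 0$, using norm convergence $J_{n_m}\delta_j\to J_{(s)}\delta_j$ for each fixed $j$, yields $(J_{(s)}-\lambda)\varphi_\infty=0$, so $\lambda\in\sigma(J_{(s)})$ whenever $\varphi_\infty\ne 0$.

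The main obstacle I expect is the residual sub-case $\varphi_\infty=0$, in which the mass of $\varphi_m$ also escapes within $\ell^2(\mathbb{N})$ and strong convergence alone no longer produces an eigenvector of a strong limit. To overcome it I would mimic the partition-of-unity localization of Last--Simon: tile $\mathbb{N}$ by overlapping smooth cutoffs $\{j_{\alpha,L}\}$ of length $L$ whose squares form a partition of unity, and apply Proposition~\ref{prop:LastSimon} with $A=J_{n_m}-\lambda$. A pigeonhole restricted to tiles of position $\alpha>\alpha_0$ (with $\alpha_0\to\infty$ chosen diagonally, possible because $\varphi_m$ is weakly null so the mass of $\varphi_m$ outside $[1,\alpha_0+L]$ tends to $1$) then extracts a compactly supported unit vector $\varphi_m'$ with $\|(J_{n_m}-\lambda)\varphi_m'\|\to 0$ and support in a window of length $O(L)$ centered at some $p_m\to\infty$. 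Translating by $p_m$, the shifted Jacobi matrix equals the shift of $J$ by $p_m+k_{n_m}\to\infty$, so a further subsequential strong limit on $\ell^2(\mathbb{Z})$ is a right limit $J^{(r)}$ of $J$; the translated $\varphi_m'$, having bounded support, converges in norm along a subsequence to a unit vector $\tilde\varphi\ne 0$ satisfying $(J^{(r)}-\lambda)\tilde\varphi=0$, placing $\lambda\in\sigma(J^{(r)})$ and completing the proof.
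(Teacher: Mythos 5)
Your argument is correct in outline but follows a genuinely different route from the paper's. The paper first proves an intermediate result (Proposition \ref{prop:PropB}) identifying $\sigma_{\text{ess}}(K)$ with $\sigma_{\text{ess}}(J)\cup\Sigma$, where $\Sigma$ is the set of accumulation points of discrete eigenvalues of the $J_n$'s; this uses only the algebra of direct sums ($\sigma(K)=\overline{\bigcup_n\sigma(J_n)}$, invariance of the essential spectrum under passing to tails, and a classification of the leftover points as isolated eigenvalues of finite multiplicity of $K$). The remaining step $\Sigma\subseteq\bigcup_s\sigma(J_{(s)})$ is then handled with spectral measures: writing $\mu_k$ for the spectral measure of $J_{n_k}$ at $\delta_1$, the identity $|\psi_k(1)|^2=\mu_k(\{E_k\})$ splits the analysis into the case $\mu_k(\{E_k\})\to 0$ (where the eigenvector is re-embedded into $J$ to produce a singular Weyl sequence, contradicting $E\notin\sigma_{\text{ess}}(J)$) and the case $\limsup_k\mu_k(\{E_k\})>0$ (where weak convergence $\mu_k\to\mu_s$ forces $E\in\text{supp}(\mu_s)$). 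You instead run everything through Weyl sequences for $K$ and a mass-distribution trichotomy; your sub-case in which the weak limit $\varphi_\infty$ is nonzero, giving a genuine eigenvector of $J_{(s)}$, plays the role of the paper's positive-mass case and is clean, as are your Case 1 and the forward inclusion. What your approach buys is that it avoids spectral measures and Proposition \ref{prop:PropB} entirely; what it costs is the extra machinery in the residual sub-case.

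That residual sub-case ($\varphi_\infty=0$) is also the one soft spot. With $L$ fixed, Proposition \ref{prop:LastSimon} only gives $\sum_\alpha\|(J_{n_m}-\lambda)j_{\alpha,L}\varphi_m\|^2\lesssim \|(J_{n_m}-\lambda)\varphi_m\|^2+L^{-2}$, so your localized unit vectors satisfy $\|(J_{n_m}-\lambda)\varphi_m'\|\lesssim\epsilon_m+L^{-1}$, and the limiting vector $\tilde\varphi$ satisfies $\|(J^{(r)}-\lambda)\tilde\varphi\|\lesssim L^{-1}$ rather than $(J^{(r)}-\lambda)\tilde\varphi=0$ as you assert. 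This yields only $\text{dist}\left(\lambda,\sigma\left(J^{(r_L)}\right)\right)\lesssim L^{-1}$ with the right limit possibly depending on $L$, so you must either invoke the closedness of $\bigcup_r\sigma\left(J^{(r)}\right)$ (true, but a nontrivial imported fact; cf.\ the remark after Theorem \ref{thm:LastSimon}) or add a further limiting argument. There is a much cheaper exit: in this sub-case $\varphi_m\to 0$ weakly, hence $\varphi_m(1)\to 0$, so re-embedding $\varphi_m$ into $\ell^2(\mathbb{N})$ shifted by the tail index $k_{n_m}$ (where $J_{n_m}=J^{[k_{n_m}]}$) costs only the single boundary term $|a_{k_{n_m}}\varphi_m(1)|\to 0$ and produces a weakly null Weyl sequence for $J$ itself, giving $\lambda\in\sigma_{\text{ess}}(J)=\bigcup_r\sigma\left(J^{(r)}\right)$ directly. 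This is exactly the re-embedding trick the paper uses in its own proof, and it would replace your entire partition-of-unity step.
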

\vspace{0.3cm}

\noindent Before proving Proposition \ref{prop:PropA} we present another preliminary
proposition:
\noindent \medskip{}
\begin{prop}\label{prop:PropB}
The essential spectrum of $K$ satisfies:
\[
\sigma_{\text{ess}}(K)=\sigma_{\text{ess}}\left(J\right)\cup\Sigma
\]
where,
\begin{align*}
\Sigma_{\text{ }}=&\ \Sigma_0\big\backslash\sigma_{\text{ess}}\left(J\right), \\
\Sigma_0= &\left\{ \vphantom{\int_t} E\in\mathbb{R}\,|\,\exists\left(n_{k}\right)_{k=1}^{\infty}\subseteq\mathbb{N},n_{k+1}\geq
n_{k},\, n_k \rightarrow \infty, \, \left(g_{k}\right)\in\ell^{2}\left(\mathbb{N}\right),\,\lambda_{k}\in\mathbb{R}, \right.\\
&\, \left. \text{s.t.\ }   J_{n_{k}}g_{k}=\lambda_{k}g_{k},\text{ and }\lambda_{k}\underset{k\to\infty}{\longrightarrow}E\right\}
\end{align*}
$($i.e.\ $\Sigma$ is the set of limit points of eigenvalues of the $J_{n}$'s that are not in $\sigma_{\textrm{ess}})$.
\end{prop}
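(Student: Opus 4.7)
My plan is to establish both inclusions in $\sigma_{\text{ess}}(K) = \sigma_{\text{ess}}(J) \cup \Sigma$ separately, using the equivalent form $\sigma_{\text{ess}}(J) \cup \Sigma = \sigma_{\text{ess}}(J) \cup \Sigma_0$. The key preliminary observation is that each $J_n$ is a tail of $J$, so its set of right limits coincides with that of $J$: any right limit of $J^{[k]}$ along indices $n_j \to \infty$ is the right limit of $J$ along $n_j + k$, and vice versa. Applied to each tail, Theorem \ref{thm:LastSimon} in the one-dimensional setting yields $\sigma_{\text{ess}}(J_n) = \sigma_{\text{ess}}(J)$ for every $n$.

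For the inclusion $\sigma_{\text{ess}}(J) \cup \Sigma_0 \subseteq \sigma_{\text{ess}}(K)$, I exhibit orthonormal Weyl sequences for $K$ at each $\lambda$ on the left. If $\lambda \in \sigma_{\text{ess}}(J) = \sigma_{\text{ess}}(J_1)$, any Weyl sequence for $J_1$, embedded isometrically into a single fixed copy of $J_1$ inside $K$, remains a Weyl sequence for $K$ because $K$ acts block-diagonally. If $\lambda \in \Sigma_0$, fix eigenpairs $J_{n_k} g_k = \lambda_k g_k$ with $\|g_k\| = 1$, $n_k \to \infty$, and $\lambda_k \to \lambda$; after passing to a subsequence the $n_k$ may be taken strictly increasing, so that embedding each $g_k$ into a chosen copy of $J_{n_k}$ inside $\oplus_{j=1}^{i_{n_k}} J_{n_k}$ produces pairwise orthogonal unit vectors $\tilde g_k$ satisfying $(K - \lambda)\tilde g_k = (\lambda_k - \lambda)\tilde g_k \to 0$.

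For the reverse inclusion, fix $\lambda \in \sigma_{\text{ess}}(K) \setminus \sigma_{\text{ess}}(J)$ and assume for contradiction that $\lambda \notin \Sigma_0$. A diagonal argument converts the negation of $\lambda \in \Sigma_0$ into the quantitative statement that there exist $\delta > 0$ and $N \in \mathbb{N}$ such that no eigenvalue of $J_n$ lies in $(\lambda - \delta, \lambda + \delta)$ for any $n \geq N$. Since $\sigma_{\text{ess}}(J)$ is closed and misses $\lambda$, I may shrink $\delta$ further to guarantee $(\lambda - \delta, \lambda + \delta) \cap \sigma_{\text{ess}}(J) = \emptyset$. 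Using $\sigma(J_n) = \sigma_{\text{ess}}(J_n) \cup \sigma_{\text{discrete}}(J_n)$ and that the discrete spectrum consists of isolated eigenvalues, the entire spectrum of $J_n$ avoids $(\lambda - \delta, \lambda + \delta)$ for $n \geq N$; hence $\|(J_n - \lambda)^{-1}\| \leq 1/\delta$ uniformly in such $n$. Splitting $K = K_1 \oplus K_2$ with $K_1 = \bigoplus_{n < N}\bigl(\oplus_{j=1}^{i_n} J_n\bigr)$ and $K_2 = \bigoplus_{n \geq N}\bigl(\oplus_{j=1}^{i_n} J_n\bigr)$, the finite direct sum $K_1$ has essential spectrum $\bigcup_{n<N}\sigma_{\text{ess}}(J_n) = \sigma_{\text{ess}}(J)$, which excludes $\lambda$, while $(K_2 - \lambda)^{-1}$ is bounded by $1/\delta$; together these force $\lambda \notin \sigma_{\text{ess}}(K)$, a contradiction. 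The main delicacy is precisely this conversion of $\lambda \notin \Sigma_0$ into a uniform spectral gap for the $J_n$ with $n \geq N$, and the simultaneous use of $\lambda \notin \sigma_{\text{ess}}(J)$ to lift the eigenvalue gap to a full spectral gap via $\sigma = \sigma_{\text{ess}} \cup \sigma_{\text{discrete}}$.
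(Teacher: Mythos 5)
Your proof is correct, but it follows a genuinely different route from the paper's, chiefly in the inclusion $\sigma_{\text{ess}}(K)\subseteq\sigma_{\text{ess}}(J)\cup\Sigma$. The paper argues set-theoretically: it writes $\sigma(K)=\overline{\bigcup_n\sigma(J_n)}=\sigma_{\text{ess}}(J)\cup\overline{\bigcup_n\sigma_n}$ with $\sigma_n=\sigma_{\text{disc}}(J_n)$, identifies this set with $\left(\bigcup_n\sigma(J_n)\right)\cup\Sigma$, and then shows that every point of $\sigma(K)\setminus(\sigma_{\text{ess}}(J)\cup\Sigma)$ is an isolated eigenvalue of finite multiplicity of $K$, hence belongs to $\sigma_{\text{disc}}(K)$. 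You instead argue by contradiction: negating $\lambda\in\Sigma_0$ yields a uniform $\delta$-gap in the point spectra of the $J_n$ for $n\geq N$, which combined with $\lambda\notin\sigma_{\text{ess}}(J)=\sigma_{\text{ess}}(J_n)$ and $\sigma(J_n)=\sigma_{\text{ess}}(J_n)\cup\sigma_{\text{disc}}(J_n)$ upgrades to a full spectral gap and a uniform resolvent bound on the tail $K_2$; splitting off the finite part $K_1$ then removes $\lambda$ from $\sigma_{\text{ess}}(K)$. Your version makes explicit a point the paper leaves somewhat implicit (accumulation of eigenvalues coming from boundedly many indices is automatically excluded, since it would force $\lambda\in\sigma_{\text{ess}}$), at the price of the quantifier-negation step, which you state and use correctly. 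Your forward inclusion via orthonormal Weyl sequences placed in distinct blocks is the same content as the paper's remark that points of $\Sigma$ are either eigenvalues of infinite multiplicity or non-isolated points of $\sigma(K)$. Two small notes: the identity $\sigma_{\text{ess}}(J_n)=\sigma_{\text{ess}}(J)$ follows more economically from Weyl's theorem ($J$ differs from a finite-dimensional block direct-summed with $J_n$ by a finite-rank perturbation), as the paper does, than from the right-limit characterization; and you should record the standard fact $\sigma_{\text{ess}}(K_1\oplus K_2)=\sigma_{\text{ess}}(K_1)\cup\sigma_{\text{ess}}(K_2)$ needed to close the final step.
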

\begin{proof}[Proof of Proposition \ref{prop:PropB}]

First, note that $J_{n}$ is a finite rank perturbation of $J$ and thus
$\sigma_{\text{ess}}\left(J\right)=\sigma_{\text{ess}}\left(J_{n}\right)$
for every $n\in\mathbb{N}$. Thus $\sigma_{\text{ess}}\left(J\right)\subseteq\sigma_{\text{ess}}\left(K\right)$.
Additionally $\Sigma \subseteq \sigma(K)$ and its elements are either eigenvalues of infinite multiplicity or non-isolated points of $\sigma(K)$, so $\Sigma\subseteq\sigma_{\text{ess}}\left(K\right)$.
Thus,
\[
\sigma_{\text{ess}}(K)\supseteq\sigma_{\text{ess}}\left(J\right)\cup\Sigma.
\]
For the reverse inclusion, denote $\sigma_{n}=\sigma\left(J_{n}\right)\backslash\sigma_{\text{ess}}\left(J_{n}\right)$, so,
\begin{align*}
\sigma\left(J_{n}\right)=\sigma_{\text{ess}}\left(J_{n}\right)\cup\sigma_{n}=\sigma_{\text{ess}}\left(J\right)\cup\sigma_{n}.
\end{align*}
Then,
\begin{align*}
\sigma\left(K\right)&=\overline{\bigcup_{n}\sigma\left(J_{n}\right)}=\overline{\bigcup_{n}\left(\sigma_{\text{ess}}\left(J\right)\cup\sigma_{n}\right)}=\\
&=\overline{\sigma_{\text{ess}}\left(J\right)\cup\left(\bigcup_{n}\sigma_{n}\right)}.
\end{align*}
The essential spectrum is closed and thus we can write
\[
\sigma\left(K\right)=\sigma_{\text{ess}}\left(J\right)\cup\overline{\bigcup_{n}\sigma_{n}},
\]
and we claim that this is exactly:
\[
=\left(\bigcup_{n}\sigma\left(J_{n}\right)\right)\cup\Sigma.
\]
Indeed, $\sigma_{\text{ess}}\left(J\right)\subseteq\sigma\left(J_{n}\right)$
for every $n$, and if $\lambda\in\left(\overline{\bigcup_{n}\sigma_{n}}\right)\Big\backslash\sigma_{\text{ess}}\left(J\right)$
then it is either an isolated eigenvalue of some $J_{n}$ (so
$\lambda\in\sigma\left(J_{n}\right)$), or an accumulation point
of eigenvalues of $J_{n}$'s, in which case $\lambda\in\Sigma$.
The opposite inclusion is immediate. We now have
\[
\sigma\left(K\right)\backslash\left(\sigma_{\text{ess}}\left(J\right)\cup\Sigma\right)=\left[\left(\bigcup_{n}\sigma\left(J_{n}\right)\right)\cup\Sigma\right]\bigg\backslash\left(\sigma_{\text{ess}}\left(J\right)\cup\Sigma\right)=
\]
\[
=\left[\bigcup_{n}\left(\sigma\left(J_{n}\right)\backslash\sigma_{\text{ess}}\left(J\right)\right)\right]\bigg\backslash\Sigma=\left(
\bigcup_n\sigma_n\right)\backslash \Sigma.
\]
Each term
$\sigma_{n}=\sigma\left(J_{n}\right)\backslash\sigma_{\text{ess}}\left(J\right)=\sigma\left(J_{n}\right)\backslash\sigma_{\text{ess}}\left(J_{n}\right)=\sigma_{\text{disc}}\left(J_{n}\right)$
contains only isolated eigenvalues of finite multiplicity. Every accumulation
point of such points is contained in $\Sigma$. Hence, every point
in $\left(\cup_{n}\sigma_n\right)\backslash\Sigma$
is an isolated eigenvalue of finite multiplicity \textbf{of finitely
many} \textbf{$J_{n}$'s}, and thus it is also an isolated eigenvalue of finite multiplicity of $K$. We conclude that
$\sigma\left(K\right)\backslash\left(\sigma_{\text{ess}}\left(J\right)\cup\Sigma\right)\subseteq\sigma_{\text{disc}}\left(K\right)$,
and thus $\sigma_{\text{ess}}\left(K\right)\subseteq\sigma_{\text{ess}}\left(J\right)\cup\Sigma$.
\end{proof}

\begin{proof}[Proof of Proposition \ref{prop:PropA}]
We shall show that
\begin{equation} \nonumber
{\left(\bigcup_{r}\sigma\left(J^{\left(r\right)}\right)\right)\cup\left(\bigcup_{s}\sigma\left(J_{\left(s\right)}\right)\right)}
=\sigma_{\text{ess}}\left(J\right)\cup\Sigma
\end{equation}
which will imply the result by Proposition \ref{prop:PropB}.

The inclusion
\begin{equation} \nonumber
{\left(\bigcup_{r}\sigma\left(J^{\left(r\right)}\right)\right)\cup\left(\bigcup_{s}\sigma\left(J_{\left(s\right)}\right)\right)}
\subseteq \sigma_{\text{ess}}\left(J\right)\cup\Sigma
\end{equation}
follows from the fact that by Theorem \ref{thm:LastSimon}
\begin{equation} \nonumber
\left(\bigcup_{r}\sigma\left(J^{\left(r\right)}\right)\right) \subseteq \sigma_{\text{ess}}\left(J\right)
\end{equation}
and
\begin{equation} \nonumber
\bigcup_{s}\sigma\left(J_{\left(s\right)}\right)\subseteq \sigma_{\text{ess}}\left(J\right)\cup\Sigma
\end{equation}
since every $\lambda \in \sigma\left(J_{\left(s \right)} \right)$ is a limit point of a sequence $\{\lambda_n \}_n$ with $\lambda_n \in \sigma
\left(J_n \right)$.

For the reverse inclusion, again by Theorem \ref{thm:LastSimon} we have that
$\sigma_{\text{ess}}\left(J\right)\subseteq{\cup_{r}\sigma\left(J^{\left(r\right)}\right)}$.
Thus (using Proposition \ref{prop:PropB}) it is sufficient to prove that $\Sigma\subseteq{\cup_{s}\sigma\left(J_{\left(s\right)}\right)}$.
Let $E\in\Sigma$. Assume $\left\{ E_{k}\right\} $ is a sequence of eigenvalues  of $\left\{ J_{n_{k}}\right\} $, with $n_{k+1}\geq n_{k}$,
s.t.\ $E_{k}\to E$, and let $\psi_{k}$ be the corresponding eigenfunctions, satisfying $J_{n_{k}}\psi_{k}=E_{k}\psi_{k}$, $\left\Vert
\psi_{k}\right\Vert =1$. Since $n_{k}\to\infty$ we may assume, by restricting to a subsequence if
necessary, that $J_{n_{k}}$ converges strongly to some $J_{\left(s\right)}$, i.e.\ for any $\psi\in\ell^2\left(\mathbb{N}\right)$,
$\left\Vert J_{\left(s\right)}\psi-J_{n_{k}}\psi\right\Vert \to0$.

Denote by $\mu_{k}$ the spectral measure of $J_{n_{k}}$ and $\delta_{1}=(1,0,0,0,\ldots)$.
Then,
\[
\mu_{k}\underset{w}{\longrightarrow}\mu_{s}
\]
where $\mu_{s}$ is the spectral measure of $J_{\left(s\right)}$
and $\delta_{1}$.

\noindent Assume first that $\lim_{k}\mu_{k}\left(\left\{ E_{k}\right\} \right)=0$. We shall show that in this case $E \in
\sigma_{\textrm{ess}}(J)$, in contradiction with $E \in \Sigma$. Note (letting $\chi_{E}\left( J_{n_k}\right)$ be the spectral projection of $J_{n_k}$ onto $\left\{E \right \}$)
\[
\left|\psi_{k}(1)\right|^{2}=\left(\delta_1,\psi_k \right)\left(\psi_k,\delta_1 \right)=\left(\delta_1,\chi_{E_k}\left(J_{n_k} \right) \delta_1 \right)=\mu_{k}\left(\left\{ E_{k}\right\} \right)\underset{k\to\infty}{\longrightarrow}0.
\]
Define $\left\{ \widetilde{\psi}_{k}\right\} _{n=1}^{\infty}$ by
\begin{equation} \nonumber
\widetilde{\psi}_{k}(j)=\begin{cases}
0\,\,\, & j<n_{k}\\
\psi_{k}(j-n_{k}+1)\,\: & j\geq n_{k}
\end{cases}.
\end{equation}
Then $\widetilde{\psi}_{k}$ satisfies $\left\Vert \widetilde{\psi}_{k}\right\Vert =1$
and,
\[
\left(J\widetilde{\psi}_{k}\right)\left(j\right)=\begin{cases}
\left(J_{n_{k}}\psi_{k}\right)\left(j-n_{k}+1\right)\,\,\, & j\geq n_{k}\\
a_{n_{k}}\psi_{k}\left(1\right) & j=n_{k}-1\\
0 & j<n_{k}-1
\end{cases}.
\]
Thus,
\[
\left\Vert J\widetilde{\psi}_{k}-E\widetilde{\psi}_{k}\right\Vert \leq\left\Vert
J\widetilde{\psi}_{k}-\widetilde{J_{n_{k}}\psi_{k}}\right\Vert +\left\Vert \widetilde{J_{n_{k}}\psi_{k}}-E\widetilde{\psi}_{k}\right\Vert \leq
\]
\[
\leq\left|a_{n_{k}}\psi_{k}\left(1\right)\right|+\left|E_{k}-E\right|\to0.
\]
In addition, it is clear that $\widetilde{\psi}_{k}\underset{\text{w}}{\longrightarrow}0$, which implies that $E \in
\sigma_{\textrm{ess}}(J)$. Thus, we can conclude that
\[\overline{\lim_{k}}\mu_{k}\left(\left\{ E_{k}\right\} \right)>0,
\]
so by taking a subsequence of $\left\{E_{k}\right\}$ we can assume that
\[\nu=\lim_{k}\mu_{k}\left(\left\{E_{k}\right\}\right)>0\]
exists.
Let $\varepsilon>0$ and $f\in\mathcal{C}\left(\mathbb{R}\right)$
s.t.\ $\text{supp}\left(f\right)\subseteq\left(E-\varepsilon,E+\varepsilon\right)$,
$f\geq0$ and $f\left(E\right)>0$. Then there exists some $N\in\mathbb{N}$ s.t.\ for every $k>N$,
\[
\left|E_{k}-E\right|<\varepsilon,\ \mu_{k}\left(\left\{ E_{k}\right\} \right)>\sfrac{\nu}{2}\ \text{and }
f\left(E_{k}\right)>\sfrac{f\left(E\right)}{2}.
\]
Now, for every $k>N$ we have that
\[
\int f\,d\mu_{k}\geq f\left(E_{k}\right)\mu_{k}\left(\left\{ E_{k}\right\} \right)\geq\sfrac{f(E)\cdot \nu}{4}>0.
\]
Hence by the weak convergence of the measures $\mu_{k}\underset{w}{\longrightarrow}\mu_{s}$
we conclude that $\int f\,d\mu_{s}\geq\sfrac{f(E)\cdot \nu}{4}>0$
for every such $f$, and thus (since the spectrum is a closed set)
$E\in\overline{\text{supp}\left(\mu_{s}\right)}\subseteq\sigma\left(J_{\left(s\right)}\right)$.
\end{proof}

It follows that in the spherically symmetric case, the one-dimensional decomposition provides one with an additional description of the
essential spectrum. However, in contrast to the one-dimensional case, the essential spectrum is given by spectra of one sided matrices in
addition to the whole-line ones (obtained as the right limits). Thus, for example in the case of periodic $\{A_j,B_j\}$ we would get as the
essential spectrum not only the spectrum of the corresponding periodic whole-line Jacobi matrix, but also the possible eigenvalues from the
half-line matrix.

Comparing Proposition \ref{prop:PropA} with Theorem \ref{thm:Thm3}, we conclude that the spectrum of an $\mathcal{R}$-limit of $H$ in the
spherically symmetric case is described using the spectra of the one-dimensional Jacobi matrices derived from the decomposition of $H$, as in
Proposition \ref{prop:PropA}. In fact it is possible to make an identification:

\begin{theorem} \label{thm:Thm4}
Let $H$ be a spherically symmetric Jacobi matrix on a regular tree and let
\[
K=H=\bigoplus_{n=1}^{\infty}\left(\oplus_{j=1}^{k_{n}}S_{n}\right)
\]
be its decomposition to half-line Jacobi matrices.

Let further $\left \{\left(J^{\left(r\right)}\right) \right \}$, $\left\{ \left(J_{\left(s\right)}\right)\right\}$ be the corresponding
limiting matrices defined in Proposition \ref{prop:PropA}. Then for any $J^{(r)}$ there exists an $\mathcal{R}$-limit
$\left\{L,T',v_0\right\}$ of $H$ so that $\sigma\left(J^{\left(r\right)}\right)\subseteq\sigma\left(L\right)$, and for any $J_{(s)}$ there
exists an $\mathcal{R}$-limit $\left\{\widetilde{L},T',\widetilde{v_0}\right\}$ of $H$ so that
$\sigma\left(J_{\left(s\right)}\right)\subseteq\sigma\left(\widetilde{L}\right).$
\end{theorem}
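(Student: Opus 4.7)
The plan is to explicitly build $L$ and $\widetilde L$ from the same shift sequences that produce $J^{(r)}$ and $J_{(s)}$, and then to lift approximate eigenfunctions from these one-dimensional limits back up to the tree via the spherical decomposition of the new operator on an appropriate subtree of $T'$. Concretely, fix a path to infinity $O=v_0, v_1, v_2,\ldots$ in $T$ and, passing to a subsequence of the defining shifts $\{n_j\}$ if needed, assume $\alpha_m := \lim_j A_{n_j+m}$ and $\beta_m := \lim_j B_{n_j+m}$ exist for every $m \in \mathbb{Z}$ (in the strong-limit case this is automatic for $m \ge 0$, and a further subsequence gives the $m<0$ values without disturbing the strong limit for $m \ge 0$). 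The sequence $\{v_{n_j}\}$ then produces an $\mathcal{R}$-limit $\{L, T', v_0\}$ whose underlying graph $T'$ is the $d$-regular tree with a distinguished ``backbone'' $v_0, v_{-1}, v_{-2}, \ldots$ (the limit of the geodesic $v_{n_j}, v_{n_j-1}, \ldots, O$). Each $u \in T'$ has a signed level $\ell(u) \in \mathbb{Z}$: if the geodesic from $v_0$ to $u$ goes $a$ steps along the backbone and then $b$ steps into a subtree, then $\ell(u) = -a+b$, and the operator $L$ acts via $a^L_{u,v} = \alpha_{\min(\ell(u),\ell(v))}$ and $b^L(u) = \beta_{\ell(u)}$. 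The same recipe produces $\widetilde L$ for $J_{(s)}$.

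For Part 1, given $\lambda \in \sigma(J^{(r)})$ and $\varepsilon>0$ pick a normalized compactly supported $\psi \in \ell^2(\mathbb{Z})$ with $\text{supp}(\psi) \subseteq [-N+1, N-1]$ and $\|(J^{(r)}-\lambda)\psi\| < \varepsilon$. Let $T'_{-N}$ be the component of $T' \setminus \{(v_{-N},v_{-N-1})\}$ containing $v_{-N}$. Since $\ell(u) = -N + \text{dist}(u,v_{-N})$ for $u \in T'_{-N}$, the restriction $L|_{T'_{-N}}$ is spherically symmetric about $v_{-N}$, and its radial half-line Jacobi matrix $\widetilde{S^N}$ on $\ell^2(\mathbb{N}_0)$ has parameters $\tilde a_k = \sqrt{d-1}\,\alpha_{-N+k}$, $\tilde b_k = \beta_{-N+k}$; under the reindexing $k \mapsto k-N$ this coincides with $J^{(r)}$ restricted to $\{-N,-N+1,\ldots\}$ with a Dirichlet end at $-N-1$. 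Setting $g(k) := \psi(k-N)$ yields $g(0)=0$, $\|g\|=1$, and $\|(\widetilde{S^N}-\lambda)g\| = \|(J^{(r)}-\lambda)\psi\|<\varepsilon$, and the radial lift $\Psi(u) := g(\text{dist}(u,v_{-N}))/(d-1)^{\text{dist}(u,v_{-N})/2}$ is a normalized approximate eigenfunction of $L|_{T'_{-N}}$ with the same error. Because $\Psi(v_{-N})=g(0)=0$, extending $\Psi$ by zero to all of $T'$ introduces no additional error at the severed edge, so $\lambda \in \sigma(L)$.

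For Part 2, the identification in Proposition \ref{prop:PropA} together with an index shift realizes $J_{(s)}$ as the half-line Jacobi matrix $\tilde J$ on $\ell^2(\mathbb{N}_0)$ with parameters $\sqrt{d-1}\,\alpha_k, \beta_k$, and this $\tilde J$ is precisely the radial component in the spherical decomposition of $\widetilde L|_{T_0}$, where $T_0\subset T'$ is the forward subtree rooted at $\widetilde{v_0}$. For $\mu \in \sigma(\tilde J)$ and $\varepsilon>0$ pick a normalized compactly supported $\tilde\psi$ with $\|(\tilde J-\mu)\tilde\psi\|<\varepsilon$ and lift it radially to $\Psi_0$ on $T_0$. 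Assuming $d\ge 3$, the parent $v_{-1}$ of $\widetilde{v_0}$ has at least one other child $s_1$, and the forward subtree $T_{s_1} \subset T'$ rooted at $s_1$ is isomorphic to $T_0$ by a level-preserving map (both roots sit at level $0$), so the analogous lift $\Psi_1$ obeys $\|(\widetilde L|_{T_{s_1}}-\mu)\Psi_1\|<\varepsilon$. Set $\Psi := (\Psi_0 - \Psi_1)/\sqrt{2}$: the disjoint supports give $\|\Psi\|=1$, and the only outside vertex where $\widetilde L\Psi$ can \emph{a priori} be nonzero is $v_{-1}$, at which the two boundary contributions $\alpha_{-1}\Psi_0(\widetilde{v_0})$ and $\alpha_{-1}\Psi_1(s_1)$ both equal $\alpha_{-1}\tilde\psi(0)$ and cancel exactly. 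Combining the interior errors then gives $\|(\widetilde L-\mu)\Psi\|<\varepsilon$, so $\mu\in\sigma(\widetilde L)$. The degenerate case $d=2$ corresponds to $T\cong\mathbb{Z}$ and reduces to Theorem \ref{thm:LastSimon}.

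The main obstacle is controlling the boundary that appears when one lifts a half-line approximate eigenfunction from a spherical decomposition back to the full $\mathcal{R}$-limit: the lift naturally lives on a proper rooted subtree of $T'$ and produces spurious matrix elements on the edge joining this subtree to the rest of $T'$. In Part 1 the cure is to push the support of $\psi$ deep into the backbone so that the lift vanishes at the boundary vertex; in Part 2 the half-line eigenfunction $\tilde\psi$ need not vanish at its left endpoint, and the more delicate step is to exploit the extra symmetry of $\widetilde L$ — the existence of isomorphic sibling subtrees with identical radial parameters — to antisymmetrize the lift across two of them and cancel the boundary contributions exactly.
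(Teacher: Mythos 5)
Your construction of $L$ is the same as the paper's: the paper formalizes your ``backbone'' as an \emph{ancestors sequence} $\{v_k'\}$ obtained by a compactness argument (Propositions \ref{prop:RlimOnRegT} and \ref{prop:ancestorsSeq}), and its operator $L$, defined on the descendant subtrees $\Gamma_{v_j'}$ by $(L^{(j)})_{x,x}=(J^{(r)})_{|x|-j,|x|-j}$ and $(L^{(j)})_{x,y}=\tfrac{1}{\sqrt{d-1}}(J^{(r)})_{|x|-j,|y|-j}$, is exactly your operator determined by the signed level $\ell$. You gloss the verification that this $L$ really is the $\mathcal{R}$-limit along $\{v_{n_j}\}$ (the paper does this via tree isometries and the correspondence $\widetilde n\circ f_i=n_i$ between spherical indices), but the claim is correct and follows from spherical symmetry. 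Where you genuinely diverge is the transfer of approximate eigenfunctions. The paper truncates a whole-line approximate eigenfunction $g$ of $J^{(r)}$ to $[-N,N]$, shifts it into the half-line components $L_i^{(j)}$ of the spherical decomposition of $L^{(j)}=L|_{\Gamma_{v_j'}}$, and then passes to $L$ via the strong convergence $L^{(j)}\to L$; for $J_{(s)}$ it identifies $J_{(s)}$ with one of the $L_i^{(j)}$ (with $i\geq 1$), where the boundary is controlled because non-radial components of the decomposition vanish at the root $v_j'$. You instead kill the boundary term by hand: for $J^{(r)}$ by placing the radial lift deep enough along the backbone that it satisfies the Dirichlet condition at the severed edge, and for $J_{(s)}$ by antisymmetrizing over two sibling subtrees at level $0$ --- which is, in effect, an explicit realization of the first non-radial component of the paper's spherical decomposition. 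Your route avoids the $L^{(j)}\to L$ approximation step and the extra $\varepsilon$ from truncation, at the cost of the (correctly handled) case split on whether the lifted function vanishes at the root of its subtree; both arguments are sound.
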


Note that even if $H$ is spherically symmetric, its $\mathcal{R}$-limits need not be. Thus, we find this correspondence between
$\mathcal{R}$-limits and one-dimensional Jacobi matrices quite remarkable. The \emph{constructive} proof relies on a `spherical decomposition
with respect to a point at infinity' in some sense. It is given in Appendix B.

Finally, note that Theorem \ref{thm:Thm4} implies that for spherically symmetric $H$
\begin{equation} \label{eq:EssNoClosure}
\sigma_{\text{ess}}\left(H\right)=\bigcup_{L\text{ is an \ensuremath{\mathcal{R}}-limit of \ensuremath{H}}}\sigma\left(L\right)
\end{equation}
which shows that the conclusion of Theorem \ref{thm:Thm3} is true in this case without the closure.




\section{Appendix A: calculating the spectrum of $A_{\tilde T}$}
\begin{proof}[Proof of Lemma \ref{lem:Lemma1}]
The tree $\widetilde{T}=\widetilde{T}_d$ is composed of a $d$-regular tree $T=T_{d}$
connected at a point $0\in T$ to the first point $1\in\mathbb{N}$
in a line. Thus $A_{\widetilde{T}}$ is a finite rank perturbation of $A_T\oplus A_\mathbb{N}$, and so
\[\sigma_\text{ess}\left(A_{\widetilde{T}}\right)=\sigma_\text{ess}\left(A_T\right)\cup\sigma_\text{ess}\left(A_\mathbb{N}\right)=\sigma_\text{ess}\left(A_T\right)=[-2\sqrt{d-1},2\sqrt{d-1}].
\]
Therefore $d\notin\sigma_\text{ess}\left(A_{\widetilde{T}}\right)$. We want to exclude the possibility that $d \in
\sigma\left(A_{\widetilde{T}} \right) \setminus \sigma_{\textrm{ess}}\left(A_{\widetilde{T}} \right)$.

Using Dirac's bra-ket notation, define $A_{0}=A_{\widetilde{T}}-\left|\delta_{0}\right\rangle \left\langle \delta_{1}\right|-\left|\delta_{1}\right\rangle \left\langle
\delta_{0}\right|$
and $R\left(z\right)=\left(A_{\widetilde{T}}-z\right)^{-1}$, $R_{0}\left(z\right)=\left(A_{0}-z\right)^{-1}$.
Recall the resolvent identity (we omit the dependence on $z$),
\begin{equation} \label{eq:resid}
R_{0}-R=R_{0}\left(A_{\widetilde{T}}-A_{0}\right)R=R_{0}\left(\left|\delta_{0}\right\rangle \left\langle
\delta_{1}\right|+\left|\delta_{1}\right\rangle \left\langle \delta_{0}\right|\right)R.
\end{equation}
Multiplying by $\delta_{0}$ on both sides we have
\[
m_{T}(z)-m(z)=m_{T}(z)\left\langle \delta_{1}\right|R(z)\left|\delta_{0}\right\rangle,
\]
where
\[
m_{T}\left(z\right)=\left\langle \delta_{0}\right|R_{0}\left(z\right)\left|\delta_{0}\right\rangle
\]
\[
m_{\mathbb{N}}\left(z\right)=\left\langle \delta_{1}\right|R_{0}\left(z\right)\left|\delta_{1}\right\rangle
\]
\[
m\left(z\right)=\left\langle \delta_{0}\right|R\left(z\right)\left|\delta_{0}\right\rangle .
\]
Additionally, by multiplying the identity \eqref{eq:resid} by $\delta_{1}$ on the left and by $\delta_{0}$ on the right we have
\[
0-\left\langle \delta_{1}\right|R(z)\left|\delta_{0}\right\rangle =m_{\mathbb{N}}(z)m(z).
\]
Combining we get,
\[
m_{T}(z)-m(z)=m_{T}(z)\left(-m_{\mathbb{N}}(z)m(z)\right),
\]
which implies
\begin{equation} \label{eq:appendix1}
m(z)=\frac{m_{T}(z)}{1-m_{T}(z)m_{\mathbb{N}}(z)}.
\end{equation}

Now, if $\lambda\in\sigma\left(A_{\widetilde{T}}\right)\Big\backslash\sigma_\text{ess}\left(A_{\tilde{T}}\right)$, then
$\lim_{\varepsilon\to0}\text{Im}\left(m\left(\lambda+i\varepsilon\right)\right)\neq0$.
We will consider this expression for $\lambda=d$. It follows from \eqref{eq:appendix1} that
\[
\text{Im}\left(m\right)=\frac{\text{Im}\left(m_{T}\left(1+\overline{m_{T}m_{\mathbb{N}}}\right)\right)}{\left|1-m_{T}m_{\mathbb{N}}\right|^{2}}=\frac{\text{Im}\left(m_{T}\right)-\left|m_{T}\right|^{2}\text{Im}\left(m_{\mathbb{N}}\right)}{\left|1-m_{T}m_{\mathbb{N}}\right|^{2}}.
\]

It is known (see, e.g., \cite{SimonSz}) that
\[
m_{\mathbb{N}}\left(z\right)=\frac{-z+\sqrt{z^{2}-4}}{2}
\]
\[
m_{T}\left(z\right)=\frac{-2\left(d-1\right)}{\left(d-2\right)z+d\sqrt{z^{2}-4\left(d-1\right)}}.
\]
Thus
\[
\lim_{\varepsilon\to0}\text{Im}\left(m_{T}\left(d+i\varepsilon\right)\right)=\lim_{\varepsilon\to0}\text{Im}\left(m_{\mathbb{N}}\left(d+i\varepsilon\right)\right)=0.
\]
Additionally the denominator of $\text{Im}\left(m\right)$ satisfies,
\begin{align*}
1-m_{T}\left(d+i0\right)m_{\mathbb{N}}\left(d+i0\right)=&1-\left(d-1\right)\frac{d-\sqrt{d^{2}-4}}{\left(d-2\right)d+d\sqrt{d^{2}-4\left(d-1\right)}}=\\
&1-\frac{\left(d-1\right)\left(d-\sqrt{d^{2}-4}\right)}{2d\left(d-2\right)} \neq 0
\end{align*}
since the number $\sqrt{d^{2}-4}$ is
irrational for every $2<d\in\mathbb{N}$ ($d^2-4$ is not a perfect square). Thus
\begin{equation} \nonumber
\lim_{\varepsilon\to0}\left|1-m_{T}\left(d+i\varepsilon\right)m_{\mathbb{N}}\left(d+i\varepsilon\right)\right|^{2}>0,
\end{equation}
and we get
\[
\lim_{\varepsilon\to0}\text{Im}\left(m\left(d+i\varepsilon\right)\right)=0.
\]
This implies that $d\notin\sigma\left(A_{\widetilde{T}}\right)\Big\backslash\sigma_\text{ess}\left(A_{\widetilde{T}}\right)$,
and we can conclude that $d\notin\sigma\left(A_{\widetilde{T}}\right)$.
\end{proof}

\begin{remark*} As mentioned in \ref{rem:Httspec}, actually $\sigma\left(A_{\widetilde{T}}\right)=\sigma\left(A_T\right)$.
The inclusion $\sigma\left(A_{T}\right)\subseteq\sigma\left(A_{\widetilde{T}}\right)$ is clear.
Additionally, it is not hard, but is a bit cumbersome to see that for any $\lambda\notin\sigma\left(A_T\right)$ the expression
$1-m_{T}\left(\lambda\right)m_{\mathbb{N}}\left(\lambda\right)$ is nonzero, and thus in this case also
$\lambda\notin\sigma\left(A_{\widetilde{T}}\right)$.
\end{remark*}

\section{Appendix B: Proof of Theorem 5}

We begin by discussing some properties of $\mathcal{R}$-limits of regular trees.
Let $H$ be a Jacobi matrix on a $d$-regular tree $T$ (with $d>2$) with root vertex $O\in T$.
Let $\left\{H',T',v_0'\right\}$ be an $\mathcal{R}$-limit of $H$ along a path to infinity ${\left\{v_j\right\}_{j=0}^\infty\subset
V\left(T\right)}$. Since a regular tree is homogeneous, any $\mathcal{R}$-limit of $H$ is defined on the same regular tree. Thus $T'$ is
another copy of the $d$-regular tree, which, for convenience, we take to be distinct from $T$.

\begin{definition}
Given an isometry between trees $f:T\to T'$, denote by $I_{f}:\ell^{2}\left(T\right)\to\ell^{2}\left(T'\right)$
the isometry operator: $\left(I_{f}\psi\right)\left(v\right)=\psi\left(f\left(v\right)\right)$.
\end{definition}
\begin{definition}
For any vertex $u\in T$, $R>0$, denote by $P_{u,R}$ the projection operator onto $\ell^{2}\left(B_{R}\left(u\right)\right)$.
Further, for any operator $X$ on $\ell^{2}\left(T\right)$, let $X_{u,R}=P_{u,R}XP_{u,R}$.
\end{definition}

\begin{prop} \label{prop:RlimOnRegT}
Let $H$ be a Jacobi matrix on a $d$-regular tree $T$, and assume $\left\{H',T',v_0'\right\}$ is an $\mathcal{R}$-limit of $H$ along a path to
infinity $\left\{v_j\right\}_{j=0}^\infty$. Then there exists a subsequence of vertices
$\left\{u_j\right\}_{j=1}^\infty\subseteq\left\{v_j\right\}_{j=0}^\infty$ and a sequence of tree isometries
$\left\{f_{j}:T\to T'\right\}_{j=1}^\infty$, with $f_{j}\left(u_{j}\right)=v_{0}'$ (see Figure \ref{figure2}) satisfying, for every $R>0$,
\begin{equation} \label{eq:rlimonregT}
\left\Vert I_{f_{j}}H_{u_{j},R}I_{f_{j}}^{-1}-H'_{v_{0}',R}\right\Vert \underset{j\to\infty}{\longrightarrow}0.
\end{equation}

Moreover, if $H'$ is a Jacobi matrix on $T'$ and there exist sequences $\left\{u_j\right\}_{j=1}^\infty$, and
$\left\{f_j\right\}_{j=1}^\infty$ as above, s.t.\ \eqref{eq:rlimonregT} is satisfied for any $R>0$, then $\left\{H',T',v_0'\right\}$ is an
$\mathcal{R}$-limit of $H$ along the path $\left\{v_j\right\}_{j=0}^\infty$.
\end{prop}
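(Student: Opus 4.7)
The plan is to convert the bare matrix-entry convergence of Definition \ref{def:rlimdef} into the vastly stronger statement that the local restrictions of $H$ agree with those of $H'$ after an honest tree isometry. The leverage comes from the fact that $T$ and $T'$ are both $d$-regular, so balls in them are rigid rooted objects.

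For the forward direction, I would start from coherent enumerations $\eta_r^{(j)}$ at $v_{n_j}$ and $\eta_r'$ at $v_0'$ with $M_r^{(v_{n_j})} \to M_r^{(v_0')}$, and define bijections $g_r^{(j)} := (\eta_r^{(j)})^{-1} \circ \eta_r' : B_r(v_0') \to B_r(v_{n_j})$, which are coherent in $r$ by coherence of the enumerations. Entry-wise convergence translates into $a_{g_r^{(j)}(u'),g_r^{(j)}(w')} \to A'_{u',w'}$ for every adjacent pair $u' \sim w'$ in $B_r(v_0')$. Since $A'_{u',w'}>0$, every edge of $B_r(v_0')$ lands on an edge of $B_r(v_{n_j})$ for all $j$ large enough; and since both balls contain the same number of edges (both being radius-$r$ balls in a $d$-regular tree), a counting argument forces $g_r^{(j)}$ to be a graph isomorphism for $j \ge J_r$. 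Because a graph isomorphism between connected finite subtrees is automatically distance preserving, $g_r^{(j)}$ is then a tree isometry, and because the center of $B_r(v_0')$ is characterized graph-theoretically as the unique vertex of minimal eccentricity (equal to $r$ for $r \ge 1$), we get $g_r^{(j)}(v_0') = v_{n_j}$.

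A diagonal extraction $k_j \ge J_j$ then produces $u_j := v_{n_{k_j}}$ for which $g_j^{(k_j)}$ is a tree isometry of $B_j(v_0')$ onto $B_j(u_j)$. Since all rooted $d$-regular subtrees attached at a boundary vertex are mutually isomorphic, this partial isometry extends branch by branch to a global tree isometry $T' \to T$, whose inverse we take as $f_j$; then $f_j(u_j) = v_0'$ by construction. For any fixed $R$ and any $j \ge R$, coherence gives $f_j^{-1}|_{B_R(v_0')} = g_R^{(k_j)}$, so the matrix of $I_{f_j} H_{u_j,R} I_{f_j}^{-1}$ in the basis $\{\delta_{v'}\}_{v' \in B_R(v_0')}$ is precisely $M_R^{(u_j)}$ read through the identification $\mathcal{I}_R'$; entry-wise convergence $M_R^{(u_j)} \to M_R^{(v_0')}$ in a finite-dimensional space then yields the norm convergence \eqref{eq:rlimonregT}. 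For the reverse direction, fix any coherent sequence $\eta_r'$ at $v_0'$ and transport it via $f_j$ by setting $\eta_r^{(j)}(w) := \eta_r'(f_j(w))$ for $w \in B_r(u_j)$; this is a coherent enumeration at $u_j$ because $f_j$ is an isometry sending $u_j$ to $v_0'$. With these choices $M_r^{(u_j)}$ coincides (after identification through $\mathcal{I}_r'$) with $I_{f_j} H_{u_j,r} I_{f_j}^{-1}$, so the hypothesized norm convergence immediately delivers $M_r^{(u_j)} \to M_r^{(v_0')}$, completing the verification of Definition \ref{def:rlimdef} (with $n_j$ chosen so that $u_j = v_{n_j}$).

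The main obstacle is the step that promotes entry-wise matrix convergence to a graph isomorphism of balls. On its own, matrix convergence transfers edges only one way: edges in $T'$ must appear in $T$, but a weight $a_{u,w}$ in $T$ can in principle tend to $0$ without the corresponding edge disappearing from the graph. It is the edge-count equality coming from $d$-regularity of both $T$ and $T'$ that closes the loop; without this one could only embed $T'$ into $T$ locally and would need an additional assumption of uniform lower bounds on the $a$-weights to recover the proposition.
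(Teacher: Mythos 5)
Your argument is correct and follows essentially the same route as the paper's: compose the coherent enumerations at $v_{n_j}$ and at $v_0'$ to obtain vertex bijections of balls, extend these to global tree isometries, and, for the converse, transport a fixed coherent enumeration of $T'$ through $f_j$. Where you go beyond the paper is the rigidity step: the paper simply sets $I_{f_j}=\mathcal{I}'^{-1}\mathcal{I}_j$ and treats it as being induced by a tree isometry, whereas you actually justify this --- positivity of the limiting weights forces edges of $B_r(v_0')$ onto edges of $B_r(v_{n_j})$ for large $j$, and equality of edge counts in the two $d$-regular balls upgrades the edge-injection to a graph isomorphism, with the center preserved as the unique vertex of minimal eccentricity. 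Your closing remark correctly identifies why this is not automatic (a weight $a_{u,w}$ in $T$ may tend to $0$ without the edge disappearing), so your edge-counting argument is a genuine and welcome completion of a point the paper leaves implicit rather than a detour.
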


\begin{figure}[ht!]
\centering
\includegraphics[width=130mm]{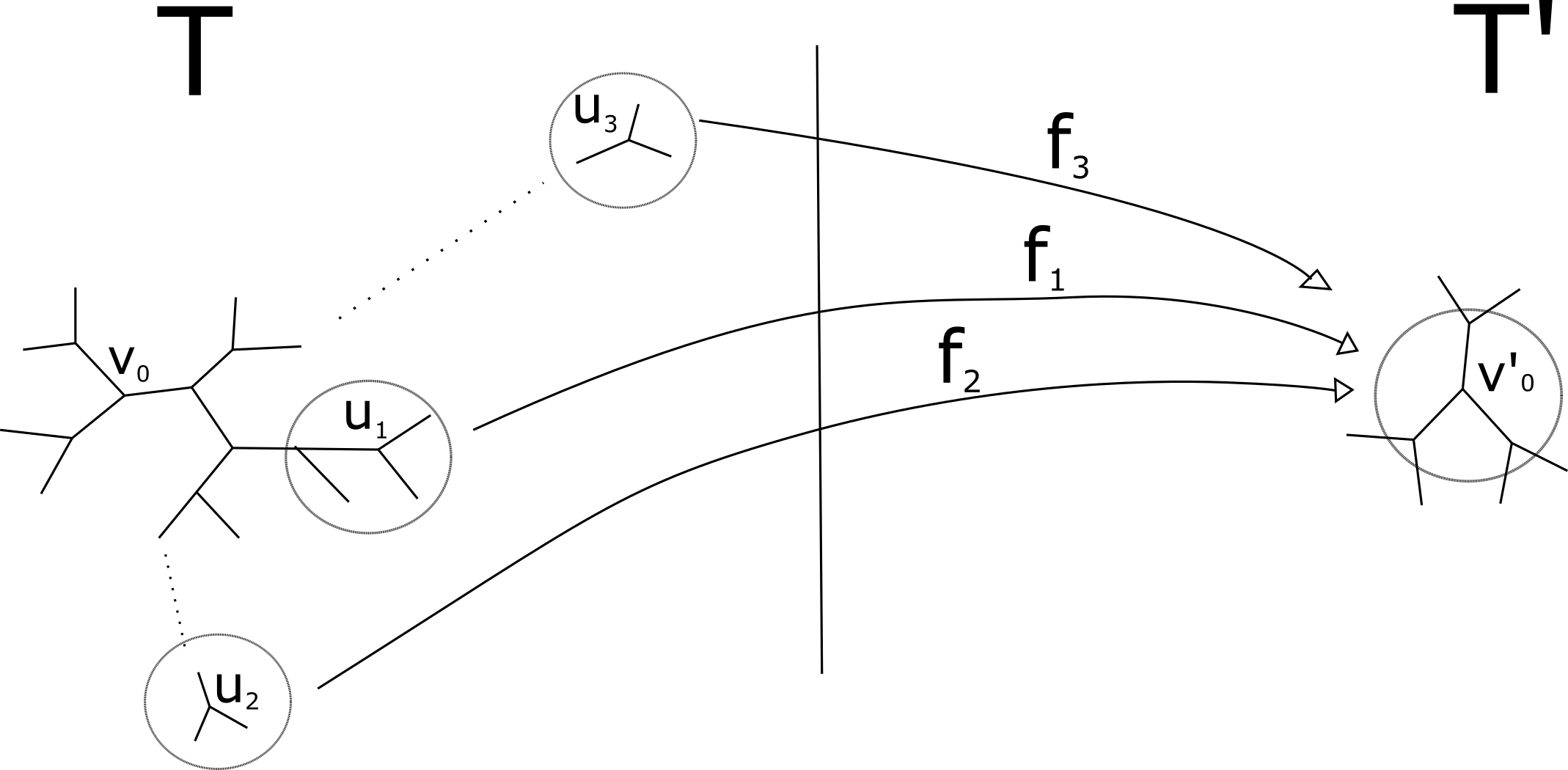}\\
\caption{The isometries $f_j$ between $T$ and $T'$. \label{figure2}}
\end{figure}

\begin{proof}
Assume the $\mathcal{R}$-limit $H'$ is obtained along the subsequence $\left\{v_{n_j}\right\}_{j=1}^\infty$ (as in Definition
\ref{def:rlimdef}), and define $u_j=v_{n_j}$. Note that for any $j\in\mathbb{N}$ the coherent isomorphisms sequence
$\left\{\mathcal{I}^{(j)}_k \right\}_{k=1}^\infty$ can be extended to an isomorphism
$\mathcal{I}_j:\ell^2\left(T\right)\to\ell^2\left(\mathbb{N}\right)$, that agrees on balls around $u_j$ with $\mathcal{I}^{(j)}_k$. Similarly,
the sequence $\mathcal{I'}_k$ can be extended to an isomorphism $\mathcal{I'}:\ell^2\left(T'\right)\to\ell^2\left(\mathbb{N}\right)$. Now we
can define $I_{f_j}$ (and $f_j$) by $I_{f_j}={\mathcal{I'}^{-1}\mathcal{I}_{j}}$. The convergence \eqref{eq:rlimonregT} then follows directly
from \eqref{eq:rlimdef}.

In the other direction, assume $\left\{u_j\right\}$ is a sequence of vertices and $\left\{f_j\right\}_{j=1}^\infty$ is a sequence of tree
isometries as above.
By compactness there is a path to infinity, $\left\{v_j\right\}_{j=1}^\infty$, which contains a subsequence
$\left\{u_j'\right\}_{j=1}^\infty\subseteq\left\{u_j\right\}_{j=1}^\infty$, i.e.\ $u_j'=v_{n_j}$, for a corresponding sequence
$\left\{n_j\right\}\subseteq\mathbb{N}$. Let $\left\{\mathcal{I'}_k\right\}_{k=1}^\infty$ be any sequence of coherent isomorphisms of $T'$
around $v_0'$. We can now define for any $j\in\mathbb{N}$ a coherent sequence of isomorphisms
$\left\{\mathcal{I}^{(j)}_{k}\right\}_{k=1}^\infty$ of $T$ around $u_j'$ by
\begin{equation} \nonumber
\mathcal{I}^{(j)}_{k}=\mathcal{I'}_k \mathcal{I}_{f_j}|_{B_k\left(u_j'\right)}.
\end{equation}
The convergence \eqref{eq:rlimdef} follows directly from  \eqref{eq:rlimonregT}.
\end{proof}

As we show next, it is possible to choose the isometries $\left\{f_j\right\}$ s.t.\ the path from $u_j$ to $v_0$ is always mapped to the same
sequence of vertices in $T'$ from $v_0'$ to infinity.

\begin{definition}
Denote by $N\left(u\right)$ the set of \textbf{neighbors} of the vertex $u$. Additionally, assuming $u\neq v_0$, denote by
$A\left(u\right)=A^1(u)=A_{v_0}\left(u\right)$ the vertex ${w\in N(u)}$ on the (shortest) path from $v_0$ to $u$. For $n\in \mathbb{N}$, let
$A^n(v)=A\left(A^{n-1}\left(v \right)\right)$.
\end{definition}

\begin{prop} \label{prop:ancestorsSeq}
Let $\left\{T,v_0\right\}$ and $\left\{T',v_0'\right\}$ be two copies of the $d$-regular tree, assume $\left\{v_j\right\}_{j=0}^\infty$ is a
path to infinity in $T$, $\left\{w_j\right\}_{j=1}^\infty\subseteq\left\{v_j\right\}_{j=0}^\infty$ is a subsequence, and  $\left\{g_{j}:T\to
T'\right\}_{j=1}^\infty$ is a sequence of tree isometries, with $g_j\left(w_j\right)=v_0'$. Then there exist a subsequence of vertices
$\left\{u_j\right\}_{j=1}^\infty\subseteq\left\{w_j\right\}_{j=1}^\infty$, a corresponding subsequence of tree isometries
$\left\{f_{j}\right\}_{j=1}^\infty\subseteq\left\{g_{j}\right\}_{j=1}^\infty$, and a path to infinity
${\left\{v_k'\right\}_{k=0}^\infty\subset V\left(T'\right)}$, s.t.\
\begin{equation} \label{eq:ancestorSeqCond}
\forall n\in\mathbb{N},\ f_{j}\left(A^{n}\left(u_{j}\right)\right)=v_{n}',
\end{equation}
for any $j\in\mathbb{N}$ s.t.\ $\left|u_{j}\right|>n$.
\end{prop}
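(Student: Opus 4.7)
The plan is a standard diagonal compactness extraction, exploiting the fact that spheres around any vertex in a regular tree are finite. I would first observe that, since each $g_j$ is an isometry and $g_j(w_j)=v_0'$, for every $n$ and every $j$ with $|w_j|\geq n$ the vertex $g_j(A^n(w_j))$ lies at distance exactly $n$ from $v_0'$, because $\mathrm{dist}_T(A^n(w_j),w_j)=n$. Thus, for each fixed $n$, the sequence $\{g_j(A^n(w_j))\}_j$ takes values in the finite sphere $S_n := \{y\in T':\mathrm{dist}(y,v_0')=n\}$.

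Next, I would build the desired subsequence by Cantor diagonalization. Set $v_0'$ to be the given root. Inductively, having constructed an infinite set $J_{n-1}\subseteq\mathbb{N}$ and vertices $v_0',v_1',\ldots,v_{n-1}'$ with $g_j(A^k(w_j))=v_k'$ for every $k\leq n-1$ and every sufficiently large $j\in J_{n-1}$, apply the pigeonhole principle to $J_{n-1}$ and the finite set $S_n$ to obtain an infinite $J_n\subseteq J_{n-1}$ and a vertex $v_n'\in S_n$ with $g_j(A^n(w_j))=v_n'$ for all sufficiently large $j\in J_n$. Then form the diagonal subsequence by choosing, for each $m\in\mathbb{N}$, an index $j_m\in J_m$ with $j_m>j_{m-1}$ and $|w_{j_m}|>m$, and set $u_m:=w_{j_m}$, $f_m:=g_{j_m}$.

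Finally, I would verify the conclusions. For each fixed $n$, by construction $f_m(A^n(u_m))=v_n'$ whenever $m\geq n$, which in particular gives \eqref{eq:ancestorSeqCond} for all $m$ with $|u_m|>n$. Adjacency $v_n'\sim v_{n+1}'$ follows because $A^n(u_m)\sim A^{n+1}(u_m)$ in $T$ and $f_m$ is an isometry; combined with $|v_n'|=n$ this shows $\{v_n'\}_{n=0}^\infty$ is a (monotone) path to infinity in $T'$. The only real task is the bookkeeping to ensure that the single final subsequence $\{u_m\}$ (with the associated $\{f_m\}$) simultaneously realizes the equality for every $n$, which is handled by the standard diagonal device used above; there is no substantive obstacle beyond this.
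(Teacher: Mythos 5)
Your proof is correct and follows essentially the same route as the paper: a pigeonhole argument on the finite spheres around $v_0'$ to fix $v_n'$ level by level, followed by a diagonal extraction. The paper states this more tersely; your version merely spells out the bookkeeping and the verification that $\{v_k'\}$ is a path to infinity.
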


\begin{proof}
By compactness, the sequence $\left\{ f_{j}\left(A\left(w_{j}\right)\right)\right\} _{j=1}^{\infty}\subset T'$ contains a vertex ${v\in
N(v_{0}')}$ an infinite number of times, denote it by $v_{1}'=v$
and restrict to this sebsequence. Continue further inductively
to define the path to infinity $\{v_{k}'\}_{k=0}^{\infty}$. Finally take the diagonal over the resulting subsequences of vertices
$\subseteq\left\{w_j\right\}_{j=1}^\infty$ and tree isometries $\subseteq\left\{g_j\right\}_{j=1}^\infty$ to define the subsequence
$\left\{u_{j}\right\}_{j=1}^\infty$ and the subsequence $\left\{f_{j}\right\}_{j=1}^\infty$.
\end{proof}

We refer to the sequence $\{v_{k}'\}_{j=0}^{\infty}$ from Proposition \ref{prop:ancestorsSeq} as an \emph{ancestors sequence}.

\begin{corollary}
Let $H$ be a Jacobi matrix on a $d$-regular tree $T$, and assume $\left\{H',T',v_0'\right\}$ is an $\mathcal{R}$-limit of $H$  along a path to
infinity ${\left\{v_j\right\}_{j=0}^\infty}$.
Then there exists a subsequence of vertices $\left\{u_j\right\}_{j=1}^\infty\subseteq\left\{v_j\right\}_{j=0}^\infty$, a sequence of tree
isometries $\left\{f_{j}\right\}_{j=1}^\infty$ as in Proposition \ref{prop:RlimOnRegT}, and an ancestors sequence
${\left\{v_k'\right\}_{k=0}^\infty\subset V\left(T'\right)}$, s.t.\ \eqref{eq:rlimonregT} and \eqref{eq:ancestorSeqCond} are satisfied.
\end{corollary}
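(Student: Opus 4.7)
The plan is to produce the $\mathcal R$-limits $L$ and $\widetilde L$ by explicit construction --- taking an $\mathcal R$-limit of $H$ along the same index subsequence that defines $J^{(r)}$ (resp.\ $J_{(s)}$) --- then to use the spherical symmetry of $H$ to identify their parameters with those of the corresponding one-dimensional limit, and only then to establish the spectral inclusions by Weyl's criterion. The transfer of approximate eigenfunctions will need two different mechanisms depending on whether the one-dimensional limit is whole-line ($J^{(r)}$) or half-line ($J_{(s)}$).

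For the construction, fix any path to infinity $\{v_j\}_{j\ge 0}$ in $T$ with $|v_j|=j$. For $J^{(r)}$, let $m_j\to\infty$ be the sequence with $A_{m_j+\ell}\to\alpha_\ell$, $B_{m_j+\ell}\to\beta_\ell$ for every $\ell\in\mathbb Z$; by Propositions~\ref{prop:RlimOnRegT} and \ref{prop:ancestorsSeq}, a subsequence yields an $\mathcal R$-limit $L$ on $T'$ with base point $v_0'$, tree isometries $f_j:T\to T'$ with $f_j(v_{m_j})=v_0'$, and a fixed ancestors sequence $\{v_k'\}$. The identity $\widetilde\ell(f_j(u))=|u|-m_j$ --- valid for $u\in B_R(v_{m_j})$ once $j$ is large, where $\widetilde\ell$ is the Busemann function on $T'$ relative to the end $\omega$ determined by $\{v_k'\}$ --- combined with the spherical symmetry of $H$, forces
\[
b^{(L)}(v')=\beta_{\widetilde\ell(v')},\qquad a^{(L)}(u',v')=\alpha_{\min(\widetilde\ell(u'),\widetilde\ell(v'))};
\]
in words, $L$ is spherically symmetric with respect to the end $\omega$ of $T'$. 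The same construction with $n_k$ in place of $m_j$ produces $\widetilde L$, whose parameters match those of $J_{(s)}$ for $\widetilde\ell\ge 0$ and are arbitrary bounded subsequential limits for $\widetilde\ell<0$ (obtained by a further compactness argument).

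For $\sigma(J^{(r)})\subseteq\sigma(L)$, given $\lambda\in\sigma(J^{(r)})$ pick $\psi\in\ell^2(\mathbb Z)$ with $\|\psi\|=1$ and $\|(J^{(r)}-\lambda)\psi\|<\epsilon$, Lipschitz-cutoff so that $\mathrm{supp}\,\psi\subseteq[-N_0,N_0]$ with $|\psi(\pm N_0)|$ as small as desired. For $N\ge N_0$ define
\[
\Psi(v')=\frac{\psi(\widetilde\ell(v'))}{\sqrt{(d-1)^{\widetilde\ell(v')+N}}}\ \text{on }T'_{v_N'},\qquad \Psi(v')=0\ \text{elsewhere},
\]
where $T'_{v_N'}$ is the descendant subtree of the ancestor $v_N'$ (at Busemann level $-N$). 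The exact counting $|T'_{v_N'}\cap\widetilde\ell^{-1}(\ell)|=(d-1)^{\ell+N}$ gives $\|\Psi\|=1$, and at each interior vertex the $(d-1)$-fold multiplicity of ascending neighbors combines with the $\sqrt{d-1}$ factor in $a^{(r)}_\ell=\sqrt{d-1}\alpha_\ell$ to produce exactly $(L-\lambda)\Psi(v')=(J^{(r)}-\lambda)\psi(\widetilde\ell(v'))/\sqrt{(d-1)^{\widetilde\ell(v')+N}}$. Hence the interior contribution to $\|(L-\lambda)\Psi\|^2$ telescopes to $\|(J^{(r)}-\lambda)\psi\|^2<\epsilon^2$, while the boundary terms at horosphere level $N_0$ and at $v_N'$ are absorbed by the smallness of $|\psi(\pm N_0)|$. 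Weyl's criterion then yields $\lambda\in\sigma(L)$.

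The naive analogue of this ansatz fails for discrete eigenvalues of $J_{(s)}$, because the associated eigenfunction has nonnegligible $|\psi(1)|$, producing an irreducible leakage term $\alpha_{-1}\psi(1)$ at the descending neighbor of $\widetilde v_0$; this is the main obstacle. To circumvent it I would exploit the spherical decomposition itself: for $\lambda\in\sigma(J_{(s)})$ and a Weyl sequence $\psi\in\ell^2(\mathbb N)$ with $\|(J_{(s)}-\lambda)\psi\|<\epsilon$, strong convergence $S_{n_k}\to J_{(s)}$ gives $\|(S_{n_k}-\lambda)\psi\|<2\epsilon$ for $k$ large; lifting $\psi$ back to $\ell^2(T)$ via the copy of $S_{n_k}$ anchored at $A(v_{n_k})$ with an angular pattern that maps under $f_k$ to a preselected fixed pattern at $v_1'$ produces $F^{(k)}\in\ell^2(T)$ with $\|F^{(k)}\|=1$ and $\|(H-\lambda)F^{(k)}\|<2\epsilon$. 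Because the embedding formulas depend only on the tree geometry (sphere sizes), on $\psi$, and on the angular pattern --- none of which vary with $k$ --- the images $I_{f_k}F^{(k)}$ stabilize to a fixed $\widetilde F\in\ell^2(T')$, and since $I_{f_k}HI_{f_k}^{-1}\to\widetilde L$ on compactly supported test functions, passing $k\to\infty$ yields $\|(\widetilde L-\lambda)\widetilde F\|\le 2\epsilon$, so $\lambda\in\sigma(\widetilde L)$. Verifying the angular-pattern compatibility across $k$, via the decomposition of \cite{AF,Breuer}, is the technically most delicate step and is where the spherical symmetry of $H$ is used most essentially.
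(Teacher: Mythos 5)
You have proved the wrong statement. The corollary under review is the short compactness assertion that, for an arbitrary Jacobi matrix $H$ on a $d$-regular tree and an arbitrary $\mathcal{R}$-limit $\left\{H',T',v_0'\right\}$ along a path $\left\{v_j\right\}$, one can \emph{simultaneously} arrange a subsequence $\left\{u_j\right\}$, tree isometries $\left\{f_j\right\}$ satisfying \eqref{eq:rlimonregT}, and an ancestors sequence satisfying \eqref{eq:ancestorSeqCond}. Its proof is two lines: Proposition \ref{prop:RlimOnRegT} supplies $\left\{u_j\right\}$ and $\left\{f_j\right\}$ with \eqref{eq:rlimonregT}; feeding these into Proposition \ref{prop:ancestorsSeq} (with $w_j=u_j$, $g_j=f_j$) extracts a further subsequence together with an ancestors sequence for which \eqref{eq:ancestorSeqCond} holds; and \eqref{eq:rlimonregT}, being a convergence statement along the sequence, survives passage to the subsequence, so both conditions hold at once. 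Your proposal takes exactly this fact for granted (``by Propositions \ref{prop:RlimOnRegT} and \ref{prop:ancestorsSeq}, a subsequence yields \dots\ a fixed ancestors sequence'') and devotes all of its effort to constructing the $\mathcal{R}$-limits $L$, $\widetilde L$ of a \emph{spherically symmetric} $H$ and establishing the spectral inclusions $\sigma\left(J^{(r)}\right)\subseteq\sigma\left(L\right)$ and $\sigma\left(J_{(s)}\right)\subseteq\sigma\left(\widetilde L\right)$. That is the content of Theorem \ref{thm:Thm4}, not of the corollary: the corollary makes no symmetry assumption on $H$, involves no one-dimensional matrices $J^{(r)}$, $J_{(s)}$, and asserts nothing about spectra. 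The one step that is actually responsive to the statement --- the simultaneous extraction --- is asserted rather than argued.

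For what it is worth, as a sketch of Theorem \ref{thm:Thm4} your argument runs broadly parallel to the paper's: the paper also builds $L$ by pulling the entries of $J^{(r)}$ back through the ancestors sequence (its operators $L^{(j)}$ on $\Gamma_{v_j'}$ are the restrictions of your horospherically symmetric operator), and it transfers approximate eigenfunctions via the spherical decomposition of $L^{(j)}$, which plays the same role as your $(d-1)$-weighted radial ansatz. But none of that addresses the corollary you were asked to prove.
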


As the term `ancestors' suggests, an ancestors sequence defines a natural direction on $T'$ that we want to think of as a direction `toward the past' (or toward the tree $T$). Considering $v_{k+1}'$ as an ancestor of $v_k'$, we obtain a partial order relation on the tree comparing ancestors and their `descendants'. Explicitly, given an ancestors sequence $\left\{ v_{k}'\right\} _{k=0}^{\infty}$ in $T'$, and $k \in \mathbb{N} \cup \{0\}$, the subtree $\Gamma_{v_k'}$ is the connected component containing $v_k'$ in the graph obtained from $T'$ by removing the edge $(v_{k+1}', v_k')$. For any $v \in \Gamma_{v_k'}$ such that $v \neq v_k'$ we write $v_k'>_{D} v$ (thus $\Gamma_{v_k'}$ is the tree of `descendants' of $v_k'$, together with  the ancestor $v_k'$). Moreover, if $v \in \Gamma_{v_k'}$ for some $k$, let $\Gamma_v$ be the connected component containing $v$ in the graph obtained from $\Gamma_{v_k'}$ by removing the edge on the unique shortest path between $v$ and $v_k'$. $\Gamma_v$ is the tree of descendants of $v$ and we write $v >_{D} u$ for any $u \in \Gamma_v$ with $u\neq v$. The tree structure implies that this is well defined (i.e.\ $\Gamma_v$ does not depend on $k$ and the definitions coincide for $v = v_k'$) so that we get a partial order relation on $T'$.

With this notation in place, we are finally ready for the

\begin{proof}[Proof of Theorem \ref{thm:Thm4}]

Let $J^{(r)}$ be a right limit of $J=S_{1}$ along a sequence
$\left\{ l_{i}\right\} _{i=1}^{\infty}\subseteq\mathbb{N}$, i.e.
\begin{equation} \label{eq:Jrightlim}
\left\Vert J_{l_{i},R}-J_{0,R}^{\left(r\right)}\right\Vert \underset{i\to\infty}{\longrightarrow}0
\end{equation}
for any $R>0$. We claim that we can find a corresponding $\mathcal{R}$-limit
$\left\{L,T',v_0\right\}$ of $H$, s.t.\ $\sigma\left(J^{\left(r\right)}\right)\subseteq\sigma\left(L\right)$.
Indeed, for $i\in\mathbb{N}$ take some $u_{i}\in T$ s.t.\ $\text{\text{dist}}\left(u_{i},v_{0}\right)=l_{i}$,
and take any isomorphism of trees $f_{i}:T\to T'$ s.t.\ $f_{i}\left(u_{i}\right)=v_{0}'$.
Moreover, by Proposition \ref{prop:ancestorsSeq} and restricting to a subsequence if necessary, we can assume the existence of an ancestors
sequence $\left\{ v_{j}'\right\} _{j=0}^{\infty}$ s.t.\ \eqref{eq:ancestorSeqCond} is satisfied.
For any $j\in\mathbb{N}$ define a Jacobi matrix $L^{\left(j\right)}$ on $\ell^2\left(\Gamma_{v_j'}\right)$, with diagonal terms:
\[
\left(L^{\left(j\right)}\right)_{x,x}=\left(J^{\left(r\right)}\right)_{\left|x\right|-j,\left|x\right|-j},
\]
and off diagonal terms
\[
\left(L^{\left(j\right)}\right)_{x,y}=\frac{1}{\sqrt{d-1}}\left(J^{\left(r\right)}\right)_{\left|x\right|-j,\left|y\right|-j},
\]
where for $x\in\Gamma_{v_j'}$, $\left|x\right|=\text{dist}\left(x,v_j'\right)$. Note that $L^{\left(j\right)}$ is spherically symmetric around
$v_j'$, and that the sequence $\left\{L^{\left(j\right)}\right\}_{j=1}^\infty$ satisfies
$L^{(j)}|_{\Gamma_{v'_{j}}}=L^{(k)}|_{\Gamma_{v'_{j}}}$ for $k\geq j$ (see Figure \ref{figure3}).
\begin{figure}[ht!]
\centering
\includegraphics[width=130mm]{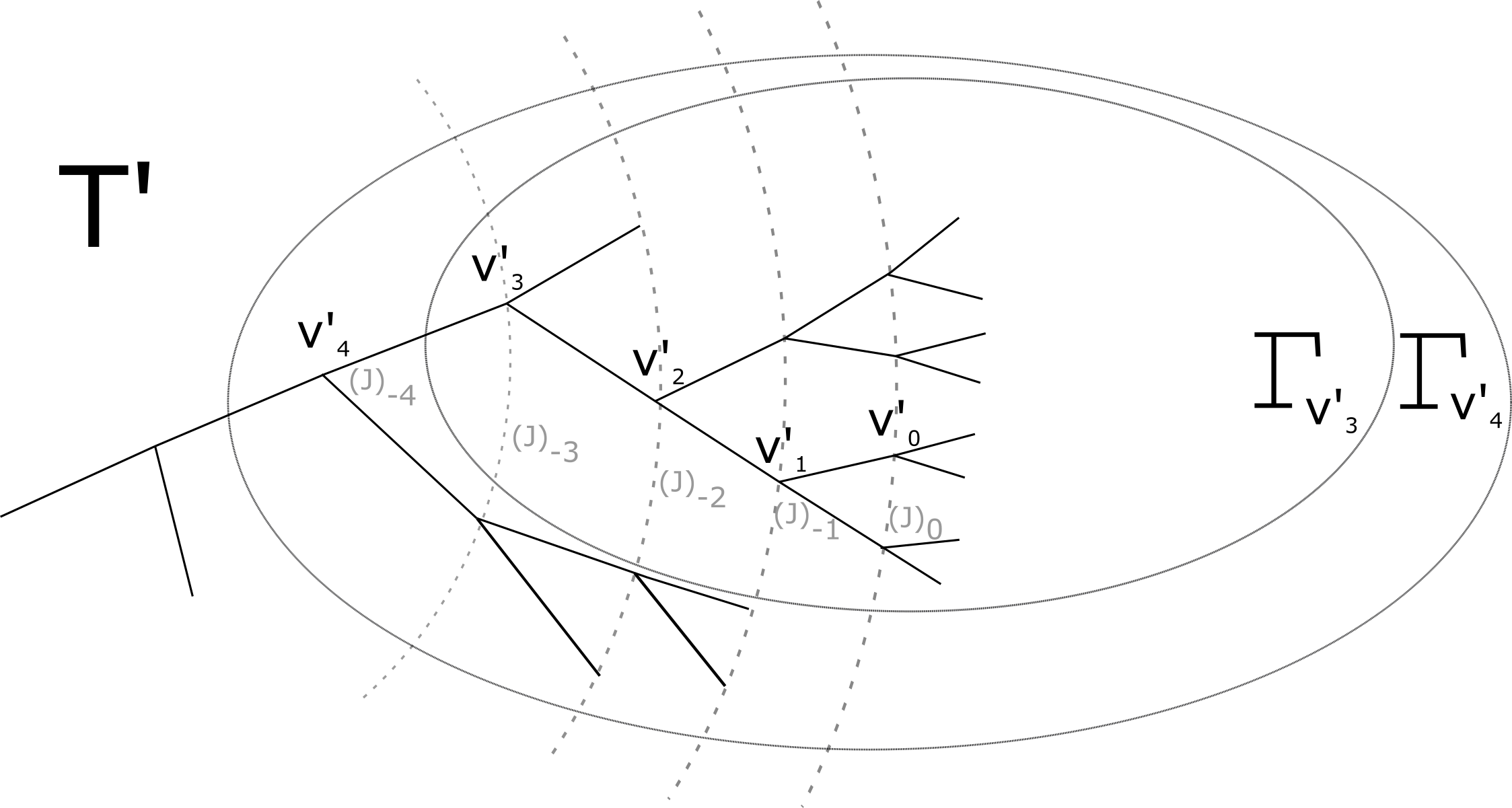}\\
\caption{The subtrees $\Gamma_{v'_j}$, and the diagonal terms of $L$, $(J)_k\equiv\left(J^{(r)}\right)_{k,k}$. \label{figure3}}
\end{figure}
Thus, we may define $L$ on $\ell^2\left(T'\right)$ by $L|_{\Gamma_{v'_{j}}}=L^{\left(j\right)}$. This defines an operator on
$\ell^2\left(T'\right)$ since $\cup_{j=1}^\infty \Gamma_{v_j'}=T'$. The sequence $\left\{L^{\left(j\right)}\right\}_{j=1}^\infty$ converges
strongly to $L$: indeed, for any $\varepsilon>0$ and $g\in\ell^{2}\left(T'\right)$ we can find $R>0$ s.t.\ $\left\Vert g|_{T'\backslash
B_{R}\left(v_{0}'\right)}\right\Vert <\varepsilon$,
and thus for any $j>R+1$,
\[
\left\Vert \left(L-L^{\left(j\right)}\right)g\right\Vert <\left\Vert L\right\Vert \left\Vert g|_{T'\backslash
B_{R}\left(v_{0}'\right)}\right\Vert <\left\Vert L\right\Vert \varepsilon.
\]

Next notice that by the spherical decomposition (and the symmetry) there exists for any $i\in\mathbb{N}$ a map
$n_i(x):B_R\left(u_i\right)\to\mathbb{N}\cap[1,2R+1]$, s.t.\ for $x,y\in B_R\left(u_i\right)$
\begin{equation} \nonumber
\left(H_{u_i,R}\right)_{x,y}=\left\{ \begin{array}{cc} \left(J_{l_i,R}\right)_{n_i(x),n_i(y)} & \textrm{if } x=y \\
\frac{1}{\sqrt{d-1}}\left(J_{l_i,R}\right)_{n_i(x),n_i(y)} & \textrm{if } x\neq y \\ \end{array} \right.
\end{equation}
Similarly, each term $\left(L_{v_0',R}\right)_{x,y}$ corresponds to a term $\left(J^{(r)}_{0,R}\right)_{\widetilde{n}(x),\widetilde{n}(y)}$
(note that $L_{v_0',R}$ is spherically symmetric around $v_R'$, but not around $v_0'$). Moreover, by the construction of $L$, the maps
$\widetilde{n}$ and $n_i$ are related by $\widetilde{n}\left(f_i(x)\right)=n_i(x)$ (for $i\in\mathbb{N},\ x\in B_R\left(u_i\right)$).
It follows, using \eqref{eq:Jrightlim}, that for any $R\in\mathbb{N}$
\[
\left\Vert I_{f_i}H_{u_i,R}I_{f_i}^{-1} - L_{v_0',R}\right\Vert<d^R\left\Vert
J_{l_i,R}-J_{0,R}^{(r)}\right\Vert\underset{i\to\infty}\longrightarrow 0.
\]
Thus by Proposition \ref{prop:RlimOnRegT}, $L$ is an $\mathcal{R}$-limit of $H$.

The spherical decomposition of $L^{\left(j\right)}$ produces a direct sum of half-line Jacobi matrices,
\begin{equation} \label{eq:Ljdecomp}
L^{\left(j\right)}\cong\bigoplus_{i=0}^{\infty}\left(\oplus L_{i}^{\left(j\right)}\right),
\end{equation}
Now, from each approximate eigenfunction of $J^{\left(r\right)}$
we can produce approximate eigenfunctions of $L_{i}^{\left(j\right)}$ above, for any $j-i$ large enough: assume $g$ is an approximate
eigenfunction of $J^{\left(r\right)}$, satisfying $\left\Vert J^{\left(r\right)}g-\lambda g\right\Vert <\varepsilon$, since
$\| g \|_2=1$ we can take $N$ large enough s.t.\ $\left\Vert g|_{\mathbb{Z}\backslash\left(-N,N\right)}\right\Vert
<\varepsilon$.
For any $m\in\mathbb{Z}$ define $h_{m}\in\ell^{2}\left(\mathbb{N}\right)$
by
\begin{equation}
h_{m}(n)=\begin{cases}
g(n-m-1) & \left|n-m\right|<N\\
0 & \left|n-m\right|\geq N
\end{cases},\nonumber
\end{equation}
then for any $j,i\in\mathbb{N}$ s.t. $j-i>N$,
\[
\left(L_{i}^{\left(j\right)}h_{j-i}\right)\left(k\right)=\left(J^{\left(r\right)}g|_{[-N,N]}\right)\left(k-j+i-1\right)
\]
for every $k\in\mathbb{N}\cap\left(-N-j+i-1,N-j+i-1\right)$. Thus,
\begin{eqnarray*}
\left\Vert L_{i}^{\left(j\right)}h_{j-i}-\lambda h_{j-i}\right\Vert  & \leq & \left\Vert J^{\left(r\right)}g|_{[-N,N]}-\lambda
g|_{[-N,N]}\right\Vert +\varepsilon
\end{eqnarray*}
\[
\leq\left\Vert J^{\left(r\right)}g-\lambda g\right\Vert +\left(\left\Vert J^{\left(r\right)}\right\Vert +\left|\lambda\right|\right)\left\Vert
g|_{\mathbb{Z}\backslash[-N,N]}\right\Vert +\varepsilon<C\cdot\varepsilon.
\]
By the unitary equivalence \eqref{eq:Ljdecomp}, an approximate eigenfuction of some $L_{i}^{\left(j\right)}$ will correspond to an approximate
eigenfunction of $L^{\left(j\right)}$, with the same eigenvalue.
Moreover, since the infinite matrix $L^{(j)}_i$ depends only on $j-i$, the same function is an approximate eigenfunction of $L^{(j)}$ for any
$j$ large enough. Thus, using the strong convergence $L^{(j)}\to L$ we get an approximate eigenfunction of $L$. Therefore
$\sigma\left(J^{(r)}\right)\subseteq \sigma\left(L\right)$.

We now turn to the second case indicated in Proposition \ref{prop:PropA}, and assume $J_{\left(s\right)}$
is a strong limit of the sequence $\left\{ J_{k}\right\} _{k=1}^{\infty}$.
Any such $J_{\left(s\right)}$ will appear as  the restriction to the half line $\ell^{2}\left(\mathbb{N}\right)$ of some right limit
$J^{\left(r\right)}$ which corresponds, as above, to an $\mathcal{R}$-limit, $L$.
Thus $J_{(s)}$ is contained in the set of matrices $\left\{ L_{i}^{\left(j\right)}\right\} _{i,j=0}^{\infty}$
above. Thus, again, any approximate eigenfunction of $J_{\left(s\right)}$
corresponds to an approximate eigenfunction of some $L_{i}^{\left(j\right)}$,
and thus also of $L$.  Hence $\sigma\left(J_{(s)}\right)\subseteq \sigma\left(L\right)$.
\end{proof}


\end{document}